\newcommand{\qee}{ \hfill\hspace{2pt}$\triangle$}
\newcommand{\marginnote}[1]{\ifthenelse{\isodd{\thepage}}{\normalmarginpar}
{\reversemarginpar}\marginpar{\fbox{\parbox{15mm}{\sloppy\footnotesize #1}}}}
\newtheorem{thm}{Theorem}[section]
\newtheorem{lemma}[thm]{Lemma}
\newtheorem{prop}[thm]{Proposition}
\newtheorem{defin}[thm]{Definition}
\theoremstyle{remark}
\newtheorem{rema}[thm]{Remark}
 \newenvironment{remark}{\begin{rema}}{\qee\end{rema}}
\newtheorem{exe}[thm]{Example}
\newcommand{\Oc}{\mathcal O}
\renewcommand{\L}{\mathcal L}
\newcommand{\ccR}{\mathcal R}
\newcommand{\D}{\mathcal{D}}
\newcommand{\R}{\mathbb R}
\newcommand{\C}{\mathbb C}
\newcommand{\Q}{\mathbb Q}
\def\ker{\operatorname{ker}}
\def\rk{\operatorname{rk}}
\def\Ad{\operatorname{Ad}}
\def\End{\operatorname{End}}
\def\dim{\operatorname{dim}}
\def\Id{\operatorname{Id}}
\def\vol{\operatorname{vol}}
\newcommand{\fE}{{\mathfrak E}}
\newcommand{\g}{{\mathfrak g}}
\newcommand{\z}{{\mathfrak z}}
\begin{document}
\begin{flushright} SISSA preprint 03/2014/mate
\end{flushright} \bigskip
\title[Approximate HYM structures on principal Higgs bundles]{Approximate Hermitian-Yang-Mills  structures \\[8pt] on semistable principal Higgs bundles}
\bigskip
\date{Revised 29 June 2014}
\subjclass[2000]{32L05, 14F05} \keywords{Principal (Higgs) bundles, semistability, approximate Hermitian-Yang-Mills structures, Hermitian-Yang-Mills metrics.}
\thanks{This research was partly supported by
GNSAGA-Istituto Nazionale per l'Alta Matematica  and PRIN ``Geometria delle variet\`a algebriche.''}

 \maketitle \thispagestyle{empty} \vspace{-3mm}
\begin{center}{\sc Ugo Bruzzo} \\
Scuola Internazionale Superiore di Studi Avanzati,\\ Via Bonomea 265, 34136
Trieste, Italia; \\ Istituto Nazionale di Fisica Nucleare, Sezione di Trieste \\ E-mail: {\tt bruzzo@sissa.it}
\\[6pt]
{\sc Beatriz Gra\~na Otero} \\
Departamento de Matem\'aticas, Pontificia Universidad Javeriana,
\\ Cra. 7$^{\hbox{\tiny \rm ma}}$ N$^{\mbox{\tiny\rm o}}$ 40-62, Bogot\'a, Colombia \\ E-mail: {\tt bgrana@javeriana.edu.co}

\end{center}

\vfill

\begin{abstract} We generalize the Hitchin-Kobayashi correspondence between semistability and the existence of approximate Hermitian-Yang-Mills structures to the case of principal Higgs bundles. We prove that a principal Higgs bundle $\fE$  on  a compact K\"ahler manifold, with structure group a connected linear algebraic  reductive group $G$, is semistable  if and only if it admits an approximate Hermitian-Yang-Mills structure.
\end{abstract}

\newpage \tableofcontents

\section{Introduction}
The notion of slope stability introduced by Mumford and Takemoto \cite{MF,Take} allows one to construct good moduli spaces of vector bundles (or, more generally, of coherent sheaves) on projective varieties. These moduli spaces usually are separated schemes of finite type, and, if suitable conditions are satisfied, they are fine, e.g., they carry universal objects on them. The definition of slope stability makes sense also on a compact K\"ahler manifold, and one can construct corresponding moduli spaces also in this case. 
However, for bundles on a compact K\"ahler manifold, there is also a differential-geometric analogue of slope stability, namely, the existence of a bundle metric which satisfies a certain condition, called the Hermitian-Yang-Mills  condition: the mean curvature of the Chern connection singled out by the metric is a (constant) multiple of the identity endomorphism \cite{koba,Lub83,Don85,Don87}. Actually, it turns out that, for an irreducible bundle, stability is equivalent to the existence of an Hermitian-Yang-Mills metric: this is the celebrated Hitchin-Kobayashi correspondence. A similar situation prevails in the presence of a Higgs field, for some other kinds of decorated bundles  \cite{S88,S92, Ban, Sch},  in the case of principal bundles \cite{RamaSubra} and principal Higgs bundles \cite{AnBis,BisSchu}.

The weaker property of semistability is equivalent to the existence of an Hermitian-Yang-Mills  metric in an approximate sense. Let us clarify the relation between (semi)stability for principal or vector bundles on compact K\"ahler manifolds, with or without Higgs fields, and the existence on these bundles of (approximate) Hermitian-Yang-Mills structures. After realizing that the sheaf
of sections of a holomorphic Hermitian (ordinary) vector bundle
satisfying the Hermitian-Yang-Mills condition is polystable (i.e.,
it is a direct sum of stable sheaves having the same slope)
\cite{koba,Lub83}, a converse result was proved. First Donaldson
in the projective case \cite{Don85,Don87}, and then Uhlenbeck and Yau
in the compact K\"ahler case \cite{UY}, showed that a stable bundle admits
a (unique up to homotheties) Hermitian metric which satisfies the
Hermitian-Yang-Mills condition. One can also show that if an
Hermitian bundle satisfies the Hermitian-Yang-Mills condition in
an approximate sense, then it is semistable, while the converse was proved in \cite{koba} for $X$ projective and in 
\cite{Jacob12} in the compact K\"ahler case.

Later Simpson \cite{S88,S92} proved a Hitchin-Kobayashi
correspondence for Higgs vector bundles. Given a Higgs bundle equipped
with an Hermitian metric, one defines a natural connection (that
we call the \emph{Hitchin-Simpson connection}); when the latter satisfies the
Hermitian-Yang-Mills condition, then the Higgs bundle is
polystable, and \emph{vice versa}.
Our main aim is to provide a suitable definition of approximate Hermitian-Yang-Mills structure for principal Higgs bundles and show that whenever a
Higgs principal bundle satisfies an approximate Hermitian-Yang-Mills
condition, it is semistable, and {\em vice versa.}

In this paper, after reviewing some basic definitions and properties, we give appropriate definitions of the notions of semistability and approximate Hermitian-Yang-Mills structure for principal Higgs bundles. In particular, one says that a principal Higgs $G$-bundle admits an approximate Hermitian-Yang-Mills structure if for every value of a positive real parameter $\xi$, there is a reduction $\sigma_{\xi}$ of the structure group to a  maximal compact subgroup such that the mean curvature of the Hitchin-Simpson connection given by the reduction approximates a central element in the Lie algebra of $G$ up to $\xi$. Then one shows that a principal Higgs $G$-bundle admits an approximate Hermitian-Yang-Mills structure if and only if it is semistable. 
A sketch of the proof of this result is as follows. If the principal Higgs bundle $\fE=(E, \phi)$ admits an approximate Hermitian-Yang-Mills structure, one shows that the same holds for the adjoint Higgs bundle $\Ad(\fE)$; the latter then is semistable \cite{BG2}, so that $\fE$ is semistable as well. The proof of the converse result relies on an analysis of the flow of the Donaldson functional defined on the space of Hermitian metrics on the Higgs   bundle $\Ad(\fE)$, showing that the flow preserves the condition that an Hermitian metric on $\Ad(\fE)$ comes from a reduction of the structure group of $\fE$ to a maximal compact subgroup $K$ (this implements in the case of principal Higgs bundles the ideas used in \cite{BisJacStem} for principal bundles, but we provide here a more detailed description of this technique).

Some related work has been done in \cite{BBJS}, where the existence --- in a suitable sense --- of a limit Hermitian-Yang-Mills connection on unstable Higgs principal $G$-bundles is established.

\bigskip\section{Preliminaries}\label{Sec:2}
We give  the basic definitions concerning Higgs bundles. Let
$X$ be an $n$-dimensional compact \ K\"ahler manifold, with K\"ahler
form $\omega$. 
Unless otherwise stated, $X$ will be assumed to be connected. 

Given a coherent sheaf $F$ on $X$, we denote by
$\deg(F)$ its degree $$\deg(F) = \int_X c_1(F)\cdot \omega^{n-1}$$
and if $r=\rk(F)>0$ we introduce its \emph{slope}
$$\mu(F) = \frac{\deg(F)}{r}.$$

\begin{defin} A Higgs sheaf $\fE$ on $X$ is a coherent sheaf $E$ on $X$
endowed with a morphism $\phi \colon E \to E \otimes \Omega_X^1$ of
$\Oc_X$-modules such that the morphism $\phi\wedge\phi : E \to E
\otimes \Omega^2_X$ vanishes, where $\Omega_X^i$ is the sheaf of holomorphic $i$-forms
on  $X$. A Higgs subsheaf $F$ of a Higgs sheaf $\fE=(E,\phi)$
is a subsheaf of $E$ such that $\phi(F)\subset F\otimes\Omega_X^1$.
A Higgs bundle is a Higgs sheaf $\fE $ such that $E$ is a
locally-free $\Oc_X$-module.
\end{defin}

There exists a stability condition for Higgs sheaves, analogous to
that for ordinary sheaves, which makes reference only to
$\phi$-invariant subsheaves.

\begin{defin}  A Higgs sheaf $\fE=(E,\phi) $ on $X$ is semistable
(resp.~stable) if $E$ is torsion-free, and $\mu(F)\le \mu(E)$
(resp. $\mu(F)< \mu(E)$) for every proper nontrivial Higgs
subsheaf $F$ of $\fE$. Finally, $\fE$ is polystable if it is a direct
sum of stable Higgs sheaves with the same slope.
\end{defin}

\subsection{Connections and fibre metrics}\label{Sec:0}
Let $\fE = (E, \phi)$ be a Higgs bundle over a compact  K\"ahler manifold
$X$ equipped with an Hermitian  fibre metric $h$. Then there is on
$E$ a unique connection $D_{(E,h)}$ which is compatible with both
the metric $h$ and the holomorphic structure of $E$ \cite{Ati}. This is often
called the \emph{Chern connection} of the Hermitian bundle
$(E,h)$.

Now let  $\bar\phi$ be the adjoint of the morphism $\phi$ with
respect to the metric $h$, i.e., the morphism $\bar\phi: E
\rightarrow \Omega^{0,1}_X\otimes E $ such that $$h(s, \phi(t)) =
h(\bar\phi(s), t)$$ for all sections $s,t$ of $E$. It is easy to
check that the operator
\begin{equation}\label{e:simpsonconnection}
\D_{(\fE,h)} = D_{(E,h)} + \phi + \bar\phi
\end{equation}
defines a connection on the bundle $E$. One should notice that
this connection is neither compatible with the holomorphic
structure of $E$, nor with the Hermitian metric $h$.
\begin{defin} \label{simpsonconnection}
The connection \eqref{e:simpsonconnection} is called the
\emph{Hitchin-Simpson connection} of the Hermitian Higgs bundle $(\fE,h)$.
Its curvature will be denoted by $\ccR_{(\fE,h)} = \D_{(\fE,h)}
\circ \D_{(\fE,h)}$. 
\end{defin}
If $\fE$ is a  Higgs line bundle the notion of Hermitian flatness coincides
with the usual one.

We shall denote by
$\mathcal{K}_{(\fE,h)}\in\End(E)$ the mean curvature of the
Simpson connection. This is defined as usual: if we consider
wedging by the K\"ahler 2-form $\omega$ as a morphism $\mathcal A^p\to
\mathcal A^{p+2}$ (where $\mathcal A^p$ is the sheaf of $\C$-valued smooth $p$-forms on $X$), and denote by $\Lambda: \mathcal A^p\to
\mathcal A^{p-2}$ its adjoint, then $\mathcal{K}_{(\fE,h)}= i
\Lambda \ccR_{(\fE,h)}$. We shall regard the mean curvature as a bilinear form on sections
of $E$ by letting
$$\mathcal{K}_{(\fE,h)}(s,t)= h(\mathcal{K}_{(\fE,h)}(s),t)\,.$$

Let us recall, for comparison and   further use, the form that the
Hitchin-Kobayashi correspondence acquires for Higgs bundles
\cite[Thm.~1]{S92}.

\begin{thm} A   Higgs vector
bundle $\fE =(E, \phi)$ over  a compact  K{\"a}hler manifold  is
polystable if and only if it admits an Hermitian metric $h$ such
that the mean curvature of  the Hitchin-Simpson connection of $(\fE,h)$
satisfies the  Hermitian-Yang-Mills condition
$$ \mathcal{K}_{(\fE,h)}= c \cdot \Id _E $$
for some constant real number $c$.
\end{thm}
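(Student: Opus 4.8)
The plan is to recall that this is Simpson's theorem and that the proof is essentially a Higgs-bundle adaptation of the Uhlenbeck–Yau and Donaldson arguments, so I would not reprove it from scratch but rather indicate the structure of the argument and point to the place where the Higgs field enters. The statement has two implications. The easy direction is that the existence of such a metric $h$ forces polystability; the hard direction is the converse, the existence of a Hermitian–Yang–Mills metric on a polystable Higgs bundle.

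\medskip

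For the implication ``Hermitian--Yang--Mills $\Rightarrow$ polystable'', I would proceed as in the classical (non-Higgs) case. First I would establish a Chern--Weil type formula: for any Higgs subsheaf $F\subset\fE$, reflexivity and the fact that $F$ is $\phi$-invariant let one compute $\deg(F)$ as an integral over $X$ of $i\Lambda$ applied to the curvature of the induced (singular) metric on $F$, plus a nonnegative term coming from the second fundamental form \emph{and} from the off-diagonal part of $\phi$ with respect to the $C^\infty$ splitting $E = F \oplus F^\perp$. The key point, special to the Higgs setting, is that because $\phi\wedge\phi=0$ and $\bar\phi$ is the metric adjoint of $\phi$, the relevant curvature term $\mathcal R_{(\fE,h)}$ restricted to $F$ still produces a sign-definite contribution; concretely, the subbundle $F$ is preserved by $\D_{(\fE,h)}$ up to a second fundamental form $\beta\in \mathcal A^{1}(\Hom(F,F^\perp))$ whose $(1,0)$ and $(0,1)$ parts contribute $-\|\beta\|^2\le 0$ to $\deg F$. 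Combining this with the Hermitian--Yang--Mills equation $\mathcal K_{(\fE,h)} = c\,\Id_E$, which upon integration gives $c = 2\pi n\,\mu(\fE)/\vol(X)\cdot(\text{const})$, one obtains $\mu(F)\le\mu(\fE)$, with equality only if $\beta=0$, i.e.\ only if $F$ is a holomorphic and $\phi$-invariant orthogonal direct summand. This simultaneously yields semistability and, by decomposing $\fE$ along any slope-maximal $\phi$-invariant subsheaf and inducting on the rank, polystability.

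\medskip

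For the converse, ``polystable $\Rightarrow$ Hermitian--Yang--Mills'', I would reduce to the stable case (a polystable bundle is an orthogonal direct sum of stable pieces of the same slope, and one puts a Hermitian--Yang--Mills metric on each summand). For a stable Higgs bundle $\fE$, the strategy is Donaldson's heat-flow / Uhlenbeck--Yau continuity method adapted by Simpson: one studies the Donaldson functional $M(h_0,h)$ on the space of Hermitian metrics, now built from the curvature $\mathcal R_{(\fE,h)}$ of the Hitchin--Simpson connection rather than the Chern curvature, and runs the associated nonlinear heat equation $h^{-1}\dot h = -(\mathcal K_{(\fE,h)} - c\,\Id_E)$. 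One shows (i) long-time existence of the flow, (ii) that if the flow does \emph{not} converge then a suitable blow-up/weak limit argument produces a $\phi$-invariant coherent subsheaf destabilizing $\fE$, contradicting stability, so (iii) the flow converges to a metric solving the Hermitian--Yang--Mills equation. Here the Higgs field only enters as extra zeroth-order terms that are controlled because $\phi$ is holomorphic and fixed; the Weitzenböck and subsolution estimates go through with $\phi,\bar\phi$ carried along. The main obstacle — and the technical heart of Simpson's paper — is precisely step (ii): producing the destabilizing $\phi$-invariant subsheaf from a non-convergent flow, which requires the regularity theory for weak limits of Hermitian metrics (the ``Uhlenbeck--Yau trick'') together with the verification that the resulting subsheaf is automatically $\phi$-invariant, a consequence of the $\phi$-invariance being a closed condition preserved along the flow. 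Since in this paper this theorem is only quoted for comparison and later use, I would simply cite \cite{S92} for the full argument and record the above as the conceptual outline.
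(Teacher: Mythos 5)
The paper does not prove this theorem at all --- it is quoted verbatim from Simpson \cite[Thm.~1]{S92} ``for comparison and further use'' --- and your proposal ends by doing exactly the same thing, citing \cite{S92}, after a conceptual outline (Chern--Weil with second fundamental form for the easy direction, Donaldson functional plus the Uhlenbeck--Yau regularity argument producing a $\phi$-invariant destabilizing subsheaf for the hard one) that accurately reflects Simpson's argument. So your treatment matches the paper's, with a correct sketch added on top.
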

The constant $c$ is related to the slope of $E$:
\begin{equation}\label{c}
c=\frac{2n\pi}{n! \vol(X)} \mu(E)
\end{equation}
where $n=\dim(X)$ and
$$\vol(X) = \frac{1}{n!}\int_X \omega^n\,.$$

\subsection{Approximate Hermitian-Yang-Mills structures and semistability}
As we already noted, and in analogy with the case of ordinary bundles, the semistability of a Higgs
bundle may be related to the existence of an approximate
Hermitian-Yang-Mills structure, which is introduced by replacing
the Chern connection by the Hitchin-Simpson connection.

\begin{defin} We say that a Higgs   bundle $\fE=(E, \phi)$
has an \emph{approximate Her\-mi\-tian-Yang-Mills struc\-tu\-re}
if for every positive real number $\xi$ there is an Hermitian
metric $h_{\xi}$ on $E$ such that
\begin{equation}
\vert \mathcal{K}_{(\fE,h)}-c \cdot \Id _E\vert  < \xi \,.
\end{equation}
\end{defin}
Here $\vert\,\vert$ is the norm defined on a self-adjoint endomorphism of $E$ as
$$ \vert \psi \vert^2  = \operatorname{max}_{X} \operatorname{tr} \psi^2\,.$$
 The
constant $c$ is again given by equation \eqref{c}.

The following result was proved in \cite{BG2}.

\begin{thm} \label{ss} \label{approHYM} A Higgs   bundle $\fE=(E, \phi)$
on a compact K\"ahler manifold
admitting an approximate Her\-mi\-tian-Yang-Mills struc\-tu\-re is
semistable.
\end{thm}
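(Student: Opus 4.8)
\medskip\noindent
The plan is to argue by contraposition. Suppose $\fE=(E,\phi)$ is \emph{not} semistable. Since $E$ is locally free it is torsion-free, hence there is a proper nontrivial Higgs subsheaf $F\subset E$ with $\mu(F)>\mu(E)$. We may assume $F$ is saturated: $\phi$ induces a morphism $E/F\to(E/F)\otimes\Omega^1_X$, and composing it with $(E/F)\otimes\Omega^1_X\to\bigl((E/F)/\mathrm{tors}\bigr)\otimes\Omega^1_X$ kills $\mathrm{tors}(E/F)$, hence factors through $(E/F)/\mathrm{tors}$; therefore the saturation $\bar F=\ker\bigl(E\to(E/F)/\mathrm{tors}\bigr)$ is again $\phi$-invariant, and since $\rk(\bar F)=\rk(F)$ and $\deg(\bar F)\ge\deg(F)$ we still have $\mu(\bar F)\ge\mu(F)>\mu(E)$. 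For $F$ saturated the analytic set $S\subset X$ on which $F$ is not a subbundle of $E$ has complex codimension $\ge2$ --- it lies in the locus where the torsion-free sheaf $E/F$ fails to be locally free --- and on $X\setminus S$ the sheaf $F$ is an honest Higgs subbundle.

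The key input is the Chern--Weil identity for $\phi$-invariant subbundles, the Hitchin--Simpson analogue of the classical formula of \cite{koba} (see \cite{S92} for the Higgs case, and \cite{BG2}). Fix an Hermitian metric $h$ on $E$ and let $\pi$ be the $h$-orthogonal projection onto $F$ --- smooth on $X\setminus S$, and of class $L^2_1$ on $X$ by the regularity theory of \cite{UY} for coherent subsheaves; thus $\pi$ is an $h$-self-adjoint idempotent, and $\phi$-invariance reads $(\Id_E-\pi)\circ\phi\circ\pi=0$. Expanding the curvature of the Hitchin--Simpson connection in block form with respect to the smooth splitting $E=F\oplus F^{\perp}$ and taking traces against $\pi$, one obtains
\[
\deg(F)=\frac{n!}{2n\pi}\int_X\operatorname{tr}\bigl(\pi\,\mathcal{K}_{(\fE,h)}\bigr)\,\frac{\omega^{n}}{n!}-\frac{n!}{2n\pi}\Bigl(\|\bar\partial\pi\|^2_{L^2}+\|[\pi,\phi]\|^2_{L^2}\Bigr),
\]
the $L^2$-norms being taken over $X$ with respect to $h$ and $\omega$. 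Both are finite (the Higgs term is bounded, $\phi$ being a fixed smooth morphism and $\|\pi\|\le1$; the $\bar\partial\pi$ term is controlled by the $L^2_1$ bound) and both are manifestly nonnegative --- it is precisely the $\phi$-invariance $(\Id_E-\pi)\phi\pi=0$ that forces the Higgs contribution $i\Lambda[\phi,\bar\phi]$ to enter the trace against $\pi$ with this favorable sign. The normalization is pinned down by the case $F=E$: there $\pi=\Id_E$, both norms vanish, and the identity reduces to $\deg(E)=\tfrac{n!}{2n\pi}\int_X\operatorname{tr}\mathcal{K}_{(\fE,h)}\,\omega^n/n!$, in accordance with \eqref{c}. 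In particular, for every $h$,
\[
\deg(F)\ \le\ \frac{n!}{2n\pi}\int_X\operatorname{tr}\bigl(\pi\,\mathcal{K}_{(\fE,h)}\bigr)\,\frac{\omega^{n}}{n!}.
\]

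Now I feed in the hypothesis. Given $\xi>0$, choose an Hermitian metric $h_\xi$ with $\vert\mathcal{K}_{(\fE,h_\xi)}-c\cdot\Id_E\vert<\xi$ and let $\pi_\xi$ be the $h_\xi$-orthogonal projection onto $F$. Writing $\mathcal{K}_{(\fE,h_\xi)}=c\cdot\Id_E+\psi_\xi$ with $\vert\psi_\xi\vert<\xi$, and using $\operatorname{tr}(\pi_\xi)=\rk(F)$ pointwise on $X\setminus S$ together with the pointwise bound $\vert\operatorname{tr}(\pi_\xi\psi_\xi)\vert\le\rk(F)\,\vert\psi_\xi\vert$ (a sum of $\rk(F)$ diagonal entries of $\psi_\xi$ in an $h_\xi$-orthonormal frame adapted to $F$, each of absolute value $\le\|\psi_\xi\|_{\mathrm{op}}\le\vert\psi_\xi\vert$), the last displayed inequality gives
\[
\deg(F)\ \le\ \frac{n!}{2n\pi}\,\rk(F)\,(c+\xi)\,\vol(X)\ =\ \rk(F)\Bigl(\mu(E)+\frac{n!\,\vol(X)}{2n\pi}\,\xi\Bigr),
\]
where the last equality uses $\int_X\omega^{n}/n!=\vol(X)$ and $c\,n!\,\vol(X)/(2n\pi)=\mu(E)$, both from \eqref{c}. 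Dividing by $\rk(F)$ and letting $\xi\to0$ gives $\mu(F)\le\mu(E)$, contradicting the choice of $F$; hence $\fE$ is semistable.

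The step I expect to be the main obstacle is the Chern--Weil identity itself in the singular situation: one must show that $\deg(F)$, defined via $c_1$, is correctly recovered by integrating the curvature terms over $X\setminus S$ --- that is, no mass concentrates along the codimension-$\ge2$ set $S$ --- and that the second fundamental form is genuinely square-integrable across $S$. For ordinary bundles this is Kobayashi's argument \cite{koba}; the additional point here is to carry it out for the Hitchin--Simpson connection, verifying in particular that the extra curvature term $i\Lambda[\phi,\bar\phi]$ contributes, after $\operatorname{tr}(\pi\,\cdot\,)$, exactly the nonnegative quantity $\lvert[\pi,\phi]\rvert^{2}$, so that it neither harms the final estimate nor affects integrability near $S$ (recall $\phi$ is smooth). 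Everything past this identity is elementary bookkeeping with the norm $\vert\,\cdot\,\vert$ and the value \eqref{c} of $c$.
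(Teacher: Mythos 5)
Your proof is correct, and it is essentially the proof the paper relies on: Theorem \ref{approHYM} is stated without proof and attributed to \cite{BG2}, where the argument is exactly this one --- pass to the saturation of a destabilizing $\phi$-invariant subsheaf, apply the Higgs version of the Chern--Weil/Gauss--Codazzi degree formula $\deg(F)\le \frac{n!}{2n\pi}\int_X\operatorname{tr}(\pi\,\mathcal K_{(\fE,h)})\,\omega^n/n!$ (with the $\|\bar\partial\pi\|^2$ and $\|[\pi,\phi]\|^2$ terms entering with the favorable sign), and then let $\xi\to0$ using the approximate Hermitian--Yang--Mills bound. The technical point you flag (integrability of the second fundamental form across the codimension-two singular set of the saturated subsheaf) is indeed the crux, and it is exactly the input taken from \cite{koba,UY,S92} in \cite{BG2}.
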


The converse implication was proved in \cite{Cardona-I} when $\dim(X)=1$, and in \cite{LiXi12} for arbitrary dimension. More exactly,

\begin{thm} \cite{LiXi12} \label{LiXi}
If $\fE=(E, \phi)$ is a semistable Higgs bundle on a compact K\"ahler manifold, then it admits an approximate Hermitian-Yang-Mills structure.
\end{thm}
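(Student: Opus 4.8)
The plan is to run the Donaldson--Simpson nonlinear heat flow on the space of Hermitian metrics of $\fE$ and to show that semistability forces the mean curvature of the associated Hitchin--Simpson connection to become uniformly close to $c\cdot\Id_E$. Fix a smooth Hermitian metric $h_0$ on $E$ and consider the evolution equation
$$h_t^{-1}\,\partial_t h_t \;=\; -\bigl(\mathcal{K}_{(\fE,h_t)}-c\cdot\Id_E\bigr),\qquad h_t|_{t=0}=h_0.$$
Simpson's analysis of this flow for Higgs bundles over a compact K\"ahler manifold — the Hitchin--Simpson connection systematically replacing the Chern connection in Donaldson's heat-flow theory — yields a unique smooth solution $h_t$ on $[0,\infty)$, together with the variational identity that the Donaldson functional $t\mapsto M(h_0,h_t)$ is non-increasing, with derivative (after the usual normalization) equal to $-\Vert \mathcal{K}_{(\fE,h_t)}-c\cdot\Id_E\Vert_{L^2}^2$.

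Next one needs two parabolic estimates for $e(t):=\sup_X|\mathcal{K}_{(\fE,h_t)}-c\cdot\Id_E|$. Differentiating along the flow and invoking the Bochner--Weitzenb\"ock identity for the Hitchin--Simpson connection — which differs from the Chern case only by terms quadratic in $\phi$ and $\bar\phi$ and, exactly as in Donaldson's and Simpson's treatment of the stable case, combines so that the maximum principle applies — one obtains $(\partial_t-\Delta)\,|\mathcal{K}_{(\fE,h_t)}-c\cdot\Id_E|^2\le 0$. Hence $e(t)$ is non-increasing, and, $|\mathcal{K}_{(\fE,h_t)}-c\cdot\Id_E|^2$ being a subsolution of the heat equation, a parabolic mean-value inequality gives $e(t)^2\le C\int_{t-1}^{t}\Vert\mathcal{K}_{(\fE,h_s)}-c\cdot\Id_E\Vert_{L^2}^2\,ds$, with $C$ depending only on the geometry of $X$. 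Consequently, if $M(h_0,h_t)$ is bounded below as $t\to\infty$, then $\int_0^\infty\Vert\mathcal{K}_{(\fE,h_s)}-c\cdot\Id_E\Vert_{L^2}^2\,ds<\infty$, the right-hand side above tends to $0$, so $e(t)\to0$; choosing, for each $\xi>0$, a time $t$ with $e(t)<\xi$ and setting $h_\xi:=h_t$ then exhibits an approximate Hermitian--Yang--Mills structure.

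It remains to exclude $M(h_0,h_t)\to-\infty$, and this is the only place where semistability is used. The Uhlenbeck--Yau argument, in the form adapted to Higgs bundles by Simpson, shows that if the Donaldson functional is unbounded below along the flow, then from a suitable subsequence of $\{h_t\}$ one extracts a non-zero, non-trivial weakly holomorphic $L^2_1$ projection $\pi=\pi^\ast=\pi^2$ with $(\Id_E-\pi)\,\bar\partial_E\pi=0$ \emph{and} $(\Id_E-\pi)\,\phi\,\pi=0$ (the $\phi$-invariance is built into the construction, since the energy being tracked is that of the full Hitchin--Simpson curvature); by the Uhlenbeck--Yau regularity theorem $\pi$ is then the orthogonal projection onto a saturated Higgs subsheaf $F\subsetneq\fE$, and the Chern--Weil estimate accompanying the construction forces $\mu(F)>\mu(E)$, contradicting semistability. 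Therefore $M(h_0,h_t)$ is bounded below and the previous paragraph concludes the proof.

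The principal difficulty is this last step: transporting the Uhlenbeck--Yau regularity theory and the associated Chern--Weil/subsheaf estimates through the presence of the Higgs field, in particular verifying that the destabilizing weakly holomorphic projection produced by the flow is automatically $\phi$-invariant (this is where $\phi\wedge\phi=0$ and the compatibility of $\bar\phi$ with $\phi$ intervene) and that it destabilizes strictly; a secondary, more routine point is that all a priori estimates along the flow — the $C^0$-bound on $\log(h_0^{-1}h_t)$ in terms of $M(h_0,h_t)$, and the interior $C^1$ and higher estimates — must be re-derived with the extra zeroth-order terms coming from $\phi$ and $\bar\phi$. One could alternatively avoid the flow entirely: take a Jordan--H\"older filtration of $\fE$ with stable Higgs quotients $\fQ_i$, all of slope $\mu(E)$, equip the polystable graded object $\bigoplus_i\fQ_i$ with the exact Hermitian--Yang--Mills metric furnished by the Higgs Hitchin--Kobayashi correspondence recalled above, and then build metrics on $\fE$ itself by rescaling the (Higgs-twisted) extension classes towards the split bundle, the crux in that approach being to make the curvature estimates uniform in the rescaling parameter.
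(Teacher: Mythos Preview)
Your proof is correct and follows essentially the same route as the paper: the paper deduces this theorem from the three properties of the Donaldson heat flow recorded in Theorem~\ref{Dona} (quoted from \cite{S88,LiXi12}), and what you have done is precisely to sketch the arguments behind those three properties --- monotonicity of the functional, the parabolic maximum-principle/mean-value estimate that converts a lower bound on $\L$ into $\sup_X|\mathcal K_t-c\,\Id_E|\to 0$, and the Simpson/Uhlenbeck--Yau destabilizing-subsheaf argument showing that semistability supplies that lower bound. You have simply filled in the analytic details the paper leaves to the cited references.
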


\subsection{Evolution equation and Donaldson functional} A way of proving Theorem  \ref{LiXi} is to use a version of the so-called Donaldson functional for Higgs bundles. This was introduced by Simpson in \cite{S88} and was further studied in \cite{Cardona-I,Cardona-II,LiXi12}.
Let us denote by $\mathcal H^+(E)$ the space of Hermitian metrics on the vector bundle $E$. 
This is a Hilbert manifold modelled on the space  $\mathcal H(E)$ of Hermitian endomorphism of $E$ suitably completed to an $L^2$ Hilbert space.
After fixing a reference metric $k$ in $\mathcal H^+(E)$ one defines a functional on $\mathcal H^+(E)$
by letting
$$\L(h,k)=\int_X \left[ \left(i \int_0^1 tr(h_t{^{-1}}\partial h_t \cdot \mathcal R_t)dt \right)- \frac{c}{n} \log(\det(k^{-1}h)\,\omega \right]\wedge
\frac{ \omega^{n-1}}{(n-1)!}$$
where $c$ is the constant   in Equation \eqref{c}, and $h_t$ is any smooth curve in $\mathcal H^+(E)$ joining $h$ to $k$; the value of the functional turns out to be independent from the choice of this curve. 
Note that here $ \mathcal R_t$ is the curvature of the Hitchin-Simpson connection associated with the metric $h_t$ and the Higgs field $\phi$.
The flow of this functional (the ``Donaldson heat flow'')
$$h_t^{-1}\partial_t h_t =-\nabla \L =  -(\mathcal K_t -c \, \operatorname{Id}_E)$$
defines a parabolic partial differential equation which for any initial datum $h_0$ in $\mathcal H^+(E)$  has a unique smooth solution $h_t$ defined for all $t\ge 0$.

Collecting results from \cite{S88,LiXi12} one has:
 
\begin{thm} \label{Dona}
Let $\fE$ be a  Higgs vector bundle over a compact  K\"ahler manifold $X$ with K\"ahler form $\omega$,
and let $h_t$ be the solution  of the Donaldson heat flow with reference metric $k$ and initial condition $h_0$. 
Then:
\begin{enumerate} \item  the function $\L(h_t,k)$ is a monotone decreasing function of $t$;
\item if $\L(h_t,k)$ is bounded from below, then $\lim_{t\to+\infty} \vert \mathcal K_t - cI_t\vert = 0$;
\item if $\fE$  is semistable, 
$\L(h_t,k)$  is bounded from below.\end{enumerate}\end{thm}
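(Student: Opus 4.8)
The plan is to establish the three assertions separately, drawing on Simpson's analysis of the Donaldson heat flow for Higgs bundles \cite{S88} and its refinements in \cite{LiXi12}.

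For (i) the basic tool is the first-variation formula for $\L$. If $h_t$ is an arbitrary smooth curve in $\mathcal H^+(E)$ with logarithmic derivative $v_t=h_t^{-1}\partial_t h_t\in\mathcal H(E)$, then differentiating the integral defining $\L$ --- the ``secondary'' term being built precisely so that its endpoint derivative is $i\,\operatorname{tr}(v\,\mathcal R_h)$, while the $\log\det$ term contributes $c\,\operatorname{tr}(v)$ --- yields
$$\frac{d}{dt}\,\L(h_t,k)=\int_X \operatorname{tr}\bigl(v_t\,(\mathcal K_t-c\operatorname{Id}_E)\bigr)\,\frac{\omega^n}{n!}\,.$$
Substituting the heat-flow equation $v_t=-(\mathcal K_t-c\operatorname{Id}_E)$ gives $\frac{d}{dt}\L(h_t,k)=-\|\mathcal K_t-c\operatorname{Id}_E\|_{L^2}^2\le 0$, which is (i).

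For (ii) the key input is a parabolic inequality for $\psi_t:=\mathcal K_t-c\operatorname{Id}_E$. Differentiating the flow equation and applying the K\"ahler identities together with the Higgs analogue of the Weitzenb\"ock formula, one finds that $|\psi_t|^2$ is a nonnegative subsolution of the heat equation, $(\partial_t+\Delta)|\psi_t|^2\le 0$, with $\Delta$ the nonnegative Laplacian. Integrating over $X$ recovers that $\|\psi_t\|_{L^2}^2$ is nonincreasing; combined with (i) and the hypothesis that $\L(h_t,k)$ is bounded below, $\int_0^\infty\|\psi_t\|_{L^2}^2\,dt=\L(h_0,k)-\lim_{t\to\infty}\L(h_t,k)<\infty$, whence $\|\psi_t\|_{L^2}\to 0$. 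Finally the parabolic mean-value inequality for subsolutions of the heat equation gives, for $t\ge 1$ and a constant $C$ depending only on $(X,\omega)$,
$$\sup_X|\psi_t|^2\le C\int_{t-1}^{t}\|\psi_s\|_{L^2}^2\,ds\longrightarrow 0\,,$$
so $|\mathcal K_t-cI_t|\to 0$ in the $C^0$-norm, which is (ii).

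For (iii), the only place where semistability enters, I would argue by contradiction, assuming $\L(h_t,k)$ not bounded below. Writing $h_t=k\,e^{s_t}$ with $s_t$ self-adjoint with respect to $k$, the a priori estimates for the flow --- exploiting the convexity of $r\mapsto\L(k\,e^{r s_t},k)$ and a lower bound for $\L$ in terms of $\sup_X|s_t|$ in the absence of destabilizing directions --- force $\sup_X|s_t|\to\infty$. Normalizing $u_t=s_t/\sup_X|s_t|$, a subsequence converges weakly in $L^2_1$ to a nonzero self-adjoint $u_\infty$ with eigenvalues in $[-1,1]$, and the rate at which $\L$ diverges forces, for a suitable value $\lambda$ strictly between two eigenvalues of $u_\infty$, the spectral projection $\pi$ of $u_\infty$ below $\lambda$ to satisfy $\pi\in L^2_1$, $\pi=\pi^*=\pi^2$, $(\operatorname{Id}_E-\pi)\,\bar\partial\pi=0$ and $(\operatorname{Id}_E-\pi)\,[\phi,\pi]=0$. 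By the Uhlenbeck--Yau regularity theorem in its Higgs form (Simpson), $\pi$ is the orthogonal projection onto a coherent $\phi$-invariant subsheaf $F\subset E$, and the same estimate that was used to bound $\L$ from below shows $\mu(F)>\mu(E)$, contradicting the semistability of $\fE$; hence $\L(h_t,k)$ is bounded below. The main obstacle is precisely this step (iii): extracting the destabilizing Higgs subsheaf from the divergence of the functional requires the full Uhlenbeck--Yau weak-subbundle machinery adapted to the Higgs setting, together with a careful control of the asymptotics of $\L$ along the flow, whereas (i) is a routine variational computation and (ii) follows from the maximum principle once the subsolution inequality is in hand.
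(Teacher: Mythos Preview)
Your sketch is correct and follows exactly the line of argument in Simpson \cite{S88} and Li--Zhang \cite{LiXi12}: the first-variation formula for (i), the subsolution inequality $(\partial_t+\Delta)|\psi_t|^2\le 0$ combined with the parabolic mean-value inequality for (ii), and the Uhlenbeck--Yau weak-subbundle extraction (in its Higgs version) for (iii). Note, however, that the paper does not supply its own proof of this theorem at all; it is stated as a black box, with the sentence ``Collecting results from \cite{S88,LiXi12} one has'' preceding it, and is then used as input for Theorem~\ref{MainThm}. So there is nothing to compare against beyond the observation that you have correctly reproduced the content of the cited references.
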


This Theorem implies Theorem \ref{LiXi}. Moreover, if $\fE$ is stable, this can be used to prove that $\fE$ admits an Hermitian-Yang-Mills metric.

\subsection{Semistable principal bundles}\label{Sec:2}
We recall some basics about principal
bundles, notably a definition of (semi)stability for principal bundles (basic references about this topic are \cite{Rama,BalSes}).
\begin{defin}
Let $X$ be a compact  K\"ahler manifold and $G$ a reductive algebraic group over $\C$. A principal $G$-bundle  $E$ on $X$ is a complex manifold with a free action of $G$ such that $E/G \simeq X$ and  the projection $\pi:E \to X$ is locally isotrivial (locally trivial up to an \'etale cover).
\end{defin}

If $\rho\colon G \to \operatorname{Aut}(Y)$ is a
representation of $G$ as automorphisms of a variety $Y$, we may
construct the associated bundle $E(\rho)=E\times_\rho Y$, the
quotient of $E\times Y$ under the   action of $G$ given by
$(u,y)\mapsto (ug,\rho(g^{-1})y)$ for $g\in G$. If $Y=\g$ is the
Lie algebra of $G$, and $\rho$ is the adjoint action of $G$ on
$\g$, one gets the adjoint bundle of $E$, denoted by  $\Ad E $.
Another important example is obtained when $\rho$ is given by a
group homomorphism $\lambda\colon G\to G'$; in this case the
associated bundle $E'=E\times_\lambda G'$ is a principal
$G'$-bundle. We say that the structure group $G$ of $E$ has been
extended to $G'$.

If $E$ is a principal $G$-bundle on $X$, and $F$ a principal
$G'$-bundle on $X$, a morphism $E\to F$ is a pair $(f,f')$, where
$f'\colon G\to G'$ is a group homomorphism, and $f\colon E\to F$
is a morphism of bundles on $X$ which is $f'$-equivariant, i.e.,
$f(ug)=f(u)f'(g)$. Note that this induces a vector bundle morphism
$\tilde f\colon \Ad E \to\Ad(F)$ given by $\tilde
f(u,\alpha)=(f(u),f'_\ast(\alpha))$, where $f'_\ast\colon\g\to\g'$
is the morphism induced on the Lie algebras.
As an example, consider a principal $G$-bundle $E$, a group homomorphism $\lambda\colon G\to G'$,
and the extended bundle $E'$. There is a natural morphism $f\colon E\ \to E'$ defined as 
$f=\operatorname{id}\times\lambda$ if we identify $E$ with $E\times_GG$.

If $K$ is a closed subgroup of $G$, a \emph{reduction} of the structure
group $G$ of $E$ to $K$ is a principal $K$-bundle $F$ over $X$ together
with an injective $K$-equivariant bundle morphism $F \to E$.
Let $E(G/K)$ denote
the bundle over $X$ with standard fibre $G/K$ associated to $E$ via
the natural action of $G$ on the homogeneous space $G/K$.
There is an isomorphism   $E(G/K)\simeq E/K$ of bundles over $X$. Moreover,
the reductions of the structure group of $E$ to $K$ are in a one-to-one correspondence
with sections $\sigma\colon X \to E(G/K)\simeq E/K$.

 We first recall the definition of
semistable principal bundle when the base variety $X$ is a curve.
Let $T_{E/K,X}$ be the vertical tangent bundle to the bundle
$\pi_K\colon E/K \to X$.

\begin{defin} Let $E$ be a principal $G$-bundle on  a compact Riemann surface $X$. We say that
$E$ is stable (semistable) if for every proper parabolic subgroup $P\subset G$, and every reduction $\sigma\colon X \to E/P$,
the pullback $\sigma^\ast (T_{E/P,X})$ has positive (nonnegative) degree.
\end{defin}

When $X$ is a higher dimensional variety, the definition must be somewhat
refined; the introduction of an open dense subset whose complement
has codimension at least two should be compared with the definition
of (semi)stable vector bundle, which involves non-locally free subsheaves
(which are subbundles exactly on open subsets of that kind).

\begin{defin} Let $X$ be a   compact  K\"ahler manifold. A principal  $G$-bundle $E$  on $X$ is stable (semistable)
 if and only   if  for any proper parabolic subgroup $P\subset G$, any open dense subset $U\subset X$ such that $\operatorname{codim}(X-U)\ge 2$, and any  reduction $\sigma\colon U \to (E/P)_{\vert U}$ of $G$ to $P$ on $U$, one has $\deg \sigma^\ast (T_{E/P,X}) > 0$ ($\deg \sigma^\ast (T_{E/P,X}) \ge 0$).
 \label{higher}
\end{defin}

One should note that a 
line bundle defined on an open dense subset of $X$, whose complement has codimension 2 at least,  extends uniquely to the whole of $X$, so that we may consistently consider
its degree. This is discussed in detail in \cite{RamaSubra}, see also \cite{koba}, Chapter V.

\subsection{Principal Higgs bundles}\label{phb}

We switch now to principal Higgs bundles.
Let $X$ be a compact  K\"ahler manifold, and $G$ a reductive
complex  algebraic group.

\begin{defin} A principal Higgs $G$-bundle $\fE$ is a pair $(E,\phi)$, where
$E$ is a principal $G$-bundle, and $\phi$  is a global holomorphic section
 of $\Ad E \otimes\Omega^1_X$ such that $[\phi,\phi]=0$ in $\Ad E\otimes\Omega_X^2$.
\end{defin}

When $G$ is the general linear group, under the identification $\Ad E \simeq
\End(V)$, where $V$ is the vector bundle corresponding to $E$, this agrees with the usual definition  of Higgs vector bundle.

A morphism between two principal Higgs bundles $\fE=(E,\phi)$ and $\fE'=(E',\phi')$
is a principal bundle morphism $f\colon E\to E'$ such that $(f_\ast\times \operatorname{id})(\phi)=\phi'$, where
$f_\ast\colon \Ad E \to\Ad(E')$ is the induced morphism between the adjoint bundles.

Let $L$ be a closed subgroup of $G$, and $\sigma\colon X \to
E(G/L)\simeq E/L$ a reduction of the structure group of $E$ to
$L$. One has a principal $L$-bundle $F_\sigma$ on $X$ and a
principal bundle morphism $i_\sigma\colon F_\sigma\to E$ inducing
an injective morphism of bundles $\Ad(F_\sigma) \to \Ad E $. Let
$\Pi_\sigma\colon  \Ad E \otimes
\Omega^1_X\to(\Ad E /\Ad(F_\sigma))\otimes \Omega^1_X $ be the
induced projection.

\begin{defin} A section $\sigma\colon X\to E/L $ is a {\em Higgs reduction} of $(E,\phi)$
if $\phi\in\ker \Pi_\sigma$.
\end{defin}

When this happens, the reduced bundle $F_\sigma$ is equipped with a Higgs field $\phi_\sigma$ compatible with $\phi$ (i.e., $(F_\sigma,\phi_\sigma)\to (E,\phi)$ is a morphism of principal Higgs bundles).

\begin{remark} Let us again consider the case when $G$ is the general linear group $Gl(n,\C)$, and let us assume that
$P$ is a (parabolic) subgroup such that $G/P$ is the Grassmann variety $\operatorname{Gr}_k(\C^n)$ of $k$-dimensional quotients
of $\C^n$. If $V$ is the vector bundle corresponding to $E$, a reduction $\sigma$ of $G$ to $P$ corresponds to a rank $n-k$
subbundle $W$ of $V$, and the fact that $\sigma$ is a Higgs reduction means that $W$ is $\phi$-invariant, i.e., $\phi(W)\subset W\otimes \Omega^1_X$.
\end{remark}

We introduce a notion of semistability for
principal Higgs bundles (which is equivalent to the one given in
Definition 4.6 in \cite{AnBis}). We give it here when $X$ is a curve; this is generalized to
higher dimensional varieties as one does for principal bundles, see Definition \ref{higher}.

\begin{defin}\label{Hstabcurv} Let $X$ be a compact Riemann surface. A principal Higgs $G$-bundle
$\fE=(E,\phi)$ is stable (resp.~semistable)  if for every proper parabolic subgroup $P\subset G$
and every Higgs reduction $\sigma\colon X\to E/P$ one has
$\deg \sigma^\ast (T_{E/P,X})> 0$ (resp.~$\deg \sigma^\ast (T_{E/P,X})\ge 0$). \end{defin}

\bigskip\section{Hermitian-Yang-Mills structures for principal Higgs bundles}\label{Sec:3}
We  review some facts about connections on principal bundles,
covering also the case when a Higgs field is present.  Let $X$ be an $n$-dimensional   K\"ahler manifold, $G$ a complex reductive linear algebraic group and $\pi: \fE \to X$  a principal Higgs $G$-bundle.
Let $K$ be a maximal compact subgroup of $G$. Note that the Lie algebra
$\g$ of $G$ admits an involution $\iota$, called the \emph{Cartan involution}, whose +1 eigenspace is isomorphic to the Lie algebra $\mathfrak k$ of $K$. If $\fE=(E,\phi)$ is a principal Higgs $G$-bundle,
we may extend $\iota$ to an involution on the sections of the bundle $\Ad E \otimes\mathcal A^1$
(where $\mathcal A^1$ is the bundle of complex-valued smooth differential  1-forms) by letting
$$\iota(s\otimes\eta) =- \iota(s)\otimes\bar\eta\,.$$

Given  a reduction $\sigma$ of the structure group of $E$ to $K$, there is a unique connection
$D_{\sigma}$ on $E$ which is compatible with the complex structure of $E$ and with the reduction \cite{KobaNomi2}. By analogy with the vector bundle case, we call it the \emph{Chern} connection associated with the reduction $\sigma$. The Higgs field may be used to introduce another connection $$D_{\sigma,\phi} = D_\sigma + \phi + \iota(\phi)$$
which we call the \emph{Hitchin-Simpson connection} of the triple $(\fE,\sigma)=(E,\phi,\sigma)$. 

To state our results we need the notion of \emph{polystable} principal Higgs bundle.
We may associate with every character $\chi$ of $K$
a line bundle $L_\chi=E\times_\chi \C$ on $E/K$, where we regard $E$ as a principal
$K$-bundle on $E/K$. We call $\mu_\sigma$, the slope of a   reduction $\sigma$,
the group homomorphism $\mu_\sigma\colon \mathcal X(K) \to \Q$ (where $ \mathcal X(K) $
is the group of characters of $K$) which to any character $\chi$ associates the degree
of the line bundle $\sigma^\ast(L_\chi^\ast)$ \cite{Rama}.

\begin{defin} \label{admi}
A reduction $\sigma$ of the structure group of $G$ of a principal Higgs $G$-bundle
$\fE$ to a parabolic subgroup $P\subset G$  is said to be \emph{admissible} if $\mu_\sigma(\chi)=0$ for
every character of $\chi$ of $P$ which vanishes on the centre of $G$.
\end{defin}
\begin{defin} A principal Higgs $G$-bundle $\fE$ is said to be \emph{polystable}
if there is a parabolic subgroup $P$ of $G$ and a Higgs reduction $\sigma$ of the structure
group of $E$ to a Levi subgroup $L$ of $P$ such that
\begin{enumerate} \item
the reduced principal Higgs $L$-bundle $\fE_\sigma$ is stable;
\item the   principal Higgs $P$-bundle obtained by extending the structure
group of $\fE_\sigma$ to $P$ is an admissible reduction of the structure group of $\fE$ to
$P$ (cf.~Definition \ref{admi}). \end{enumerate}\end{defin}

Also in this case one has a \emph{Hitchin-Kobayashi correspondence} \cite{AnBis}. We
say that a reduction $\sigma$ of the structure group $G$ of a principal Higgs $G$-bundle $\fE$ to a maximal compact  subgroup $K$ is \emph{Hermitian-Yang-Mills}  if there is an element $\tau$ in the centre
$\mathfrak z$ of the Lie algebra $\g$ of $G$ such that
$$ \mathcal K_{\sigma,\phi} = \tau$$
where $ \mathcal K_{\sigma,\phi} $ is the \emph{mean curvature} of the Hitchin-Simpson connection.\footnote{Note that if  $\tau$ is an element in the centre $\mathfrak z$ of $\mathfrak g$, the constant equivariant  morphism $f_\tau\colon E \to \mathfrak g$, $f_\tau(u) =\tau$, defines a section of $\Ad E$.}
\begin{thm} \cite{BisSchu} \label{HK} A principal Higgs $G$-bundle $\fE$ is polystable if and only
if it admits an Hermitian-Yang-Mills reduction to a maximal compact subgroup $K\subset G$.
\end{thm}

This notion of polystability extends the one holding for Higgs vector bundles,
i.e., a Higgs vector bundle is polystable if it is a direct sum of stable Higgs vector bundles having the
same slope. 

The proof of the following
result is implicitly contained in \cite{AnBis}.

 \begin{prop}
A principal Higgs bundle is polystable if and only if its adjoint bundle is polystable.
\label{polyrem}
\end{prop}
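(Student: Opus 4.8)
The plan is to derive both implications from the two Hitchin--Kobayashi correspondences at our disposal --- Theorem~\ref{HK} for principal Higgs bundles and Simpson's correspondence \cite[Thm.~1]{S92} for Higgs vector bundles --- with the adjoint representation $\Ad\colon G\to GL(\g)$ serving as the bridge. A preliminary fact I would record first is that $\deg(\Ad E)=0$: since $G$ is reductive, $\g$ carries an $\Ad(G)$-invariant nondegenerate symmetric bilinear form, so $\Ad E\cong(\Ad E)^\ast$. Consequently the constant $c$ associated with $\Ad\fE$ by \eqref{c} is zero, and the Hermitian--Yang--Mills condition for $\Ad\fE$ reads $\mathcal K_{(\Ad\fE,h)}=0$.

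For ``$\fE$ polystable $\Rightarrow$ $\Ad\fE$ polystable'' I would argue as follows. By Theorem~\ref{HK}, $\fE$ carries a Hermitian--Yang--Mills reduction $\sigma$ to a maximal compact subgroup $K$, so $\mathcal K_{\sigma,\phi}=\tau$ for some $\tau$ in the centre $\z$ of $\g$. Choosing an $\Ad(K)$-invariant Hermitian inner product on $\g$ (which exists since $K$ is compact), $\sigma$ induces a fibre metric $h$ on $\Ad E$. Because $\operatorname{ad}\colon\g\to\End(\g)$ intertwines the adjoint action with conjugation and the Cartan involution with Hermitian adjunction, the functoriality of the Chern connection and of the operator $\iota$ under $\operatorname{ad}$ shows that the Hitchin--Simpson connection of $(\Ad\fE,h)$ equals $\operatorname{ad}(D_{\sigma,\phi})$. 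Hence its mean curvature is $\operatorname{ad}(\mathcal K_{\sigma,\phi})=\operatorname{ad}(\tau)=0$, since $\tau$ is central, and $\Ad\fE$ is polystable by Simpson's correspondence.

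The converse, ``$\Ad\fE$ polystable $\Rightarrow$ $\fE$ polystable'', is where the real work lies, and I expect it to be the main obstacle. Simpson's correspondence provides a fibre metric on $\Ad E$ with vanishing mean curvature, and by Theorem~\ref{HK} it would suffice to find such a metric \emph{induced by a reduction of $E$ to a maximal compact subgroup $K$}; the difficulty is precisely that an arbitrary Hermitian--Yang--Mills metric on the vector bundle $\Ad E$ need not come from a reduction of the structure group. To get around this I would fix any reduction $\sigma_0$ of $E$ to $K$, let $k$ be the fibre metric it induces on $\Ad E$, and run the Donaldson heat flow of Theorem~\ref{Dona} on $\Ad\fE$ with reference metric and initial datum $k$. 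Since $\Ad\fE$ is polystable it is in particular semistable, so by parts (ii) and (iii) of Theorem~\ref{Dona} the Donaldson functional stays bounded below and $|\mathcal K_t|\to0$ along the flow; polystability lets one upgrade this to smooth convergence of $h_t$ to a Hermitian--Yang--Mills metric $h_\infty$ on $\Ad E$ (cf.\ \cite{S88,LiXi12}). The crucial ingredient --- and this is exactly the technique the paper develops in the sequel, in the spirit of \cite{BisJacStem} --- is that the evolution equation $h_t^{-1}\partial_t h_t=-(\mathcal K_t-c\,\Id)$ preserves the set of fibre metrics on $\Ad E$ induced by reductions of $E$ to $K$. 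Granting this, $h_\infty$ comes from a reduction $\sigma_\infty$ of $E$ to $K$, and since $\operatorname{ad}(\mathcal K_{\sigma_\infty,\phi})=\mathcal K_{(\Ad\fE,h_\infty)}=0$ the mean curvature $\mathcal K_{\sigma_\infty,\phi}$ lies fibrewise in $\z$; one checks, as in the proof of Theorem~\ref{HK} (cf.\ \cite{AnBis,BisSchu}), that it is a constant central element, so $\sigma_\infty$ is a Hermitian--Yang--Mills reduction and $\fE$ is polystable by Theorem~\ref{HK}. As a sanity check, and as an alternative organisation of the converse, one also knows that $\Ad\fE$ semistable implies $\fE$ semistable \cite{BG2}, after which the claim reduces to showing that polystability of $\Ad\fE$ forces the canonical Higgs reduction of the semistable bundle $\fE$ to split off a stable Levi part, which can be carried out along the lines of \cite{AnBis}.
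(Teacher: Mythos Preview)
The paper does not give a proof of this proposition: it simply states that the argument is ``implicitly contained in \cite{AnBis}''. So there is no detailed proof to compare against, beyond noting that the intended argument lives inside the Anchouche--Biswas proof of the principal Hitchin--Kobayashi correspondence and not in the heat-flow machinery of Section~\ref{Sec:3}.

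Your forward implication is correct and standard: an Hermitian--Yang--Mills reduction $\sigma$ on $\fE$ with $\mathcal K_{\sigma,\phi}=\tau\in\z$ induces, via $\operatorname{ad}$, a metric on $\Ad\fE$ whose Hitchin--Simpson mean curvature is $\operatorname{ad}(\tau)=0$, so $\Ad\fE$ is polystable by Simpson's theorem.

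Your main argument for the converse has two problems. The minor one is structural: it invokes Lemma~\ref{connected}, Lemma~\ref{sub} and Theorem~\ref{remains}, all of which are proved \emph{after} Proposition~\ref{polyrem}. Since those results do not rely on the proposition this is not circular, but it runs against the evident intent that the proposition be read off from \cite{AnBis} independently of the Section~4 machinery. The more substantive problem is a gap at the limit. Theorem~\ref{remains} ensures $h_t\in\mathcal H^+_K(\Ad E)$ only for finite $t$, and Lemma~\ref{sub} asserts closedness of $\mathcal H^+_K(\Ad E)$ only within $\mathcal H^+_c(\Ad E)$; to conclude that $h_\infty$ comes from a $K$-reduction you must know that $h_\infty$ has \emph{connected} holonomy so that Proposition~\ref{p1} applies. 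You do not establish this --- Lemma~\ref{connected} covers finite $t$ only, and holonomy does not pass to limits in any way that settles the question on a non-simply-connected base. Your closing sentence, passing through ``$\Ad\fE$ semistable $\Rightarrow$ $\fE$ semistable'' and then appealing to the Levi-reduction analysis of \cite{AnBis}, is a safer route and is presumably what the citation actually has in mind.
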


\begin{defin}
A   principal Higgs $G$-bundle $\fE$  over $X$ is said to admit an \emph{approximate Hermitian-Yang-Mills structure} if there exist an element $\tau \in \z$, where $\z$ is the center of $\g$, and for every $\xi > 0$, a   reduction $\sigma_{\xi}$ of $G$ to $K$, such that $$|\mathcal K_{\sigma_{\xi},\phi}-\tau|< \xi.$$
\end{defin}
Here the norm $\vert\,\cdot\,\vert$ on the sections of $\Ad E$ is defined as
$$\vert\psi\vert= \operatorname{max}_{x\in X} \sqrt{\kappa(\psi,\iota(\psi))}$$
where $\kappa$ is an ad-invariant nondegenerate bilinear form on $\mathfrak g$ which extends the Killing-Cartan form on the semisimple part of $\mathfrak g$.
\bigskip

\section{Existence of approximate Hermitian-Yang-Mills structures}\label{Sec:3}
In this Section we prove the following result.

\bigskip

\begin{thm}\label{MainThm}
A   principal Higgs $G$-bundle $\fE$ over $X$ is semistable if and only if it admits an approximate Hermitian-Yangs-Mills structure.
\end{thm}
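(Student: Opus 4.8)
The plan is to prove the two implications separately, in each direction reducing the principal-bundle statement to the already-established vector-bundle statements via the adjoint bundle $\Ad(\fE)$.

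\medskip

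\textbf{Semistability from an approximate HYM structure.} Suppose $\fE=(E,\phi)$ admits an approximate Hermitian-Yang-Mills structure, with central element $\tau\in\z$ and reductions $\sigma_\xi$ of $G$ to $K$. First I would observe that each reduction $\sigma_\xi$ induces a Hermitian metric $h_\xi$ on the vector bundle underlying $\Ad E$ (namely the metric for which $\Ad(F_{\sigma_\xi})$ is the $\Ad K$-structure), and that the Hitchin-Simpson connection of $(\Ad(\fE),h_\xi)$ is the one induced from $D_{\sigma_\xi,\phi}$ by the adjoint representation. Consequently the mean curvature transforms functorially: $\mathcal K_{(\Ad(\fE),h_\xi)} = \ad(\mathcal K_{\sigma_\xi,\phi})$. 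Since $\tau$ is central, $\ad(\tau)=0$, so $\mathcal K_{(\Ad(\fE),h_\xi)} = \ad(\mathcal K_{\sigma_\xi,\phi}-\tau)$, and the operator norm of this is controlled by $|\mathcal K_{\sigma_\xi,\phi}-\tau|$ up to a constant depending only on $\g$ and $\kappa$. Hence $\Ad(\fE)$ admits an approximate Hermitian-Yang-Mills structure in the sense of the Higgs vector bundle definition, with constant $c=0$ (consistent with $\deg\Ad E=0$). By Theorem \ref{approHYM}, $\Ad(\fE)$ is semistable, and therefore (this is a known equivalence, dual to Proposition \ref{polyrem}, or derived directly from the definitions of Higgs reductions to parabolics and the degree of the associated vertical tangent bundle) $\fE$ itself is semistable. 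This direction is straightforward.

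\medskip

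\textbf{Approximate HYM structure from semistability.} This is the substantive half. Suppose $\fE$ is semistable. Then $\Ad(\fE)$ is a semistable Higgs vector bundle of degree zero, so by Theorem \ref{Dona} the Donaldson heat flow $h_t$ on $\mathcal H^+(\Ad E)$, started from a reference metric $k$ coming from some fixed reduction $\sigma_0$ of $G$ to $K$, has $\L(h_t,k)$ bounded below, monotone decreasing, and satisfies $\lim_{t\to\infty}|\mathcal K_t - c\,\mathrm{Id}| = 0$ with $c=0$. The key point is to show that the flow preserves the locus of metrics on $\Ad E$ that arise from reductions of the structure group of $E$ to $K$: equivalently, the locus of $h_t$ of the form $k\cdot e^{s_t}$ with $s_t$ a section of the subbundle $\ad(\Ad E)\subset\End(\Ad E)$ (the image of the adjoint action), together with the Cartan-involution symmetry built into the $K$-reduction. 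Concretely, I would identify the sub-Hilbert-manifold $\mathcal H^+_K(E)\subset\mathcal H^+(\Ad E)$ of metrics induced by $K$-reductions, compute its tangent space, and verify that the flow vector field $-(\mathcal K_t - c\,\mathrm{Id})$, which equals $-\ad(\mathcal K_{\sigma_t,\phi}-\tau_t)$ for the induced data, is tangent to $\mathcal H^+_K(E)$ at every point of it — using that the mean curvature of a Hitchin-Simpson connection built from a $K$-reduction, composed with the adjoint action, lands in $\ad(\g)$ and respects the involution $\iota$. Initial condition $h_0=k\in\mathcal H^+_K(E)$ plus uniqueness of solutions of the parabolic flow then forces $h_t\in\mathcal H^+_K(E)$ for all $t$, i.e. $h_t$ comes from a reduction $\sigma_t$ of $G$ to $K$. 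Finally, from $\mathcal K_{(\Ad(\fE),h_t)} = \ad(\mathcal K_{\sigma_t,\phi})$ and the asymptotics, I need to extract the central element $\tau$: the trace-free part of $\mathcal K_{\sigma_t,\phi}$ relative to $\z$ is exactly $\ad^{-1}$ applied to $\mathcal K_{(\Ad(\fE),h_t)}$ on the semisimple part, whose norm tends to zero, while the $\z$-component is pinned down by a topological/degree computation (as in Equation \eqref{c} applied to the extensions of $\fE$ by characters of $G$, forcing it to be the fixed $\tau$ dictated by $\deg$). Setting $\sigma_\xi=\sigma_{t(\xi)}$ for $t(\xi)$ large enough gives $|\mathcal K_{\sigma_\xi,\phi}-\tau|<\xi$.

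\medskip

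\textbf{Main obstacle.} The crux is the invariance of $\mathcal H^+_K(E)$ under the Donaldson flow. One must show not merely that the flow vector field is tangent to this locus at the starting point, but at every point of it, so that the flow genuinely stays inside; this requires a clean description of $\mathcal H^+_K(E)$ as the fixed-point set of an involution of $\mathcal H^+(\Ad E)$ induced by the Cartan involution $\iota$ (via $h\mapsto$ "$\iota$-conjugate of $h$"), checking that this involution commutes with the flow because the Hitchin-Simpson mean curvature is $\iota$-equivariant, and then invoking that the flow preserves fixed-point sets of symmetries. Subtleties of the infinite-dimensional ($L^2$-completed) setting — ensuring the fixed-point set is a genuine sub-Hilbert-manifold and that smoothness of the flow solution stays within it — and the correct identification of $\tau$ via degree theory are the places where care is needed; the rest follows the template of \cite{BisJacStem} for principal (non-Higgs) bundles, now carried through with the Higgs field along for the ride in the connection $D_{\sigma,\phi}$.
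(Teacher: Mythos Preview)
Your overall strategy matches the paper's: both directions are reduced to the vector-bundle case via $\Ad(\fE)$, and the hard direction is handled by showing the Donaldson heat flow on $\mathcal H^+(\Ad E)$ preserves the locus $\mathcal H^+_K(\Ad E)$ of metrics coming from $K$-reductions of $\fE$. The first implication is essentially identical to the paper's.

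Where you diverge is in the mechanism for proving flow-invariance of $\mathcal H^+_K(\Ad E)$. You propose to realize this locus as the fixed-point set of an involution on $\mathcal H^+(\Ad E)$ built from the Cartan involution $\iota$, and then argue the flow commutes with it. The paper does something different and more concrete: it characterizes $\mathcal H^+_K(\Ad E)$ (restricted to metrics with connected holonomy) as exactly those metrics whose Chern curvature lies in the image of $\Ad\colon \Ad(E_\sigma)\to \End(\Ad E)$, via an Ambrose--Singer holonomy argument (Proposition~\ref{p1}), and uses Nomizu's theorem (Proposition~\ref{goodhol}) to guarantee an initial reduction with holonomy equal to $K$, together with a lemma (Lemma~\ref{connected}) that connectedness of holonomy persists along the flow. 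Then $\mathcal H^+_K(\Ad E)$ is cut out as $\varpi^{-1}(0)$ for the projection $\varpi$ onto $\mathrm{Ad}^\perp$, its normal space lies in $\mathrm{Ad}^\perp$, and the gradient $-(\mathcal K_t - c\,\mathrm{Id})$ lies in the image of $\Ad$, so tangency is immediate.

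Your Cartan-involution route is underspecified and possibly problematic: $\iota$ is tied to a choice of $K$, and to let it act on sections of $\Ad E$ (and hence on metrics) you already need a global reduction, so it is not clear you get a well-defined involution on all of $\mathcal H^+(\Ad E)$ whose fixed set is precisely $\mathcal H^+_K(\Ad E)$. Even granting that, you would still need the analogue of the paper's connected-holonomy bookkeeping to exclude spurious components. Finally, the paper sidesteps your endgame extraction of $\tau$ by first reducing to semisimple $G$ (following \cite{BisJacStem}), so that $\z=0$ and the target inequality is simply $|\mathcal K_{\sigma_\xi,\phi}|<\xi$; this is cleaner than tracking the $\z$-component separately.
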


We start with some preliminary constructions. To deal with the approximate Hitchin-Kobayashi correspondence for $\fE$,
it is convenient to associate a metric to every reduction $\sigma$ of the structure group $G$ of $\fE$ to a maximal compact subgroup $K$. To this end we consider the adjoint representation $\Ad\colon G \to Gl(\mathfrak g)$ and choose a maximal compact subgroup $\tilde K$ of $Gl(\mathfrak g)$  containing $\Ad(K)$. Note that the principal Higgs bundle obtained by extending the structure group of $\fE$ to $Gl(\mathfrak g)$ via the morphism $\Ad$ is isomorphic to the Higgs bundle of linear frames $L(\Ad \fE)$ of the adjoint Higgs bundle $(\Ad E, \tilde{\phi})$, where $\tilde{\phi}$ is the Higgs map on $\Ad E $ via the morphism $(\Ad \times id)_\ast$.
The reduction $\sigma$ induces an hermitian metric $h_\sigma$ in $\Ad E$.
The adjoint morphism $\Ad$ also induces a morphism $\Ad\colon \Ad E_\sigma\to \End(\Ad E)$, and therefore also a morphism
$\Ad \colon\Omega^2( \Ad E_\sigma)\to \Omega^2(\End(\Ad E))$. It is easy to check that this morphism identifies the curvature of the Chern connection $D_\sigma$ with the curvature of the Chern connection of the metric $h_\sigma$, which are elements in $\Omega^2( \Ad E_\sigma)$ and $\Omega^2(  \End(\Ad E))$, respectively. So the curvature of the Hitchin-Simpson connection $D_{\sigma, \phi}$ on $\fE$ goes through the $\Ad$ morphism to the curvature of the Hitchin-Simpson connection $\D_{(\sigma, h_\sigma)}$  associated to the Hermitian metric $h_\sigma$ on $\Ad E$.

 \begin{prop} Assume that a metric $h$ on $\Ad E$ has a connected holonomy group. Then $h$ comes from a   reduction of $G$ to $K$ according to the previous construction if and only if the curvature of its
Chern connection lies in the image of $\Ad$.
 \label{p1}
 \end{prop}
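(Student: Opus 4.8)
The ``only if'' direction is immediate from the compatibility recorded just before the statement: if $h=h_\sigma$ comes from a reduction $\sigma$ of $G$ to $K$, then the Chern connection of $(\Ad E,h_\sigma)$ is the connection induced, through the bundle morphism $\Ad\colon\Ad E\to\End(\Ad E)$, by the Chern connection $D_\sigma$ on $E$, so its curvature is the $\Ad$-image of the curvature of $D_\sigma$, hence lies in the image of $\Ad$.

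For the converse the plan is a holonomy argument. Write $D^h$ for the Chern connection of $(\Ad E,h)$. The metric $h$ presents $L(\Ad \fE)$ as the extension to $Gl(\g)$ of the principal $\tilde K$-bundle $R_h$ of $h$-unitary frames of $\Ad E$, on which $D^h$ is a connection; by the reduction theorem for principal bundles \cite{KobaNomi2}, $D^h$ is reducible to its holonomy bundle $Q$ through a frame $u_0$ over a point $x_0\in X$, a principal bundle whose structure group is the holonomy group $H$ --- connected, by hypothesis. The heart of the matter is to show that $\operatorname{Lie}(H)$ is contained in the image of $\Ad\colon(\Ad E)_{x_0}\to\End\bigl((\Ad E)_{x_0}\bigr)$. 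For this I would first show that the holomorphic Lie bracket of the bundle of Lie algebras $\Ad E$ is $D^h$-parallel: the Jacobi identity together with the hypothesis that the curvature of $D^h$ takes values in $\Ad(\Ad E)$ makes the curvature of the connection induced by $D^h$ on $\Hom(\Lambda^2\Ad E,\Ad E)$ annihilate the bracket, and then a Bochner--Kodaira identity on the compact K\"ahler manifold $X$ forces the bracket to be parallel. Hence $\Ad(\Ad E)$ is a $D^h$-parallel subbundle of $\End(\Ad E)$, so by the Ambrose--Singer theorem $\operatorname{Lie}(H)$ --- which is spanned by parallel translates of the curvature endomorphisms, themselves valued in $\Ad(\Ad E)$ by hypothesis --- lies in $\Ad\bigl((\Ad E)_{x_0}\bigr)$. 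As $H$ is connected and $G$ is connected, $H$ is then contained in the connected subgroup of $Gl\bigl((\Ad E)_{x_0}\bigr)$ with Lie algebra $\Ad\bigl((\Ad E)_{x_0}\bigr)$, a copy of $\Ad(G)$; equivalently, $Q$ lies in the tautological reduction $\overline E$ of $L(\Ad \fE)$ to $\Ad(G)=G/\ker\Ad$.

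It remains to turn this into a reduction of $G$ to $K$. Since $H$ is a compact subgroup of $\Ad(G)$ and also of $\tilde K$, and $\Ad(K)$ is a maximal compact subgroup of $\Ad(G)$, the compact group $\Ad(G)\cap\tilde K$, which contains $\Ad(K)$, must equal it; hence $H\subseteq\Ad(K)$, and extending the structure group of $Q$ from $H$ to $\Ad(K)$ produces a principal $\Ad(K)$-subbundle $Q'\subseteq R_h$ contained in $\overline E$, that is, a reduction of $\overline E$ to $\Ad(K)$. Pulling $Q'$ back along $E\to E/\ker\Ad=\overline E$ gives a reduction of $E$ to $\Ad^{-1}(\Ad K)=K\cdot\ker\Ad$, which reduces further to $K$ because the fibre $(\ker\Ad)/(\ker\Ad\cap K)$ is a vector group and hence contractible; the resulting reduction $\sigma$ of $G$ to $K$ has associated $\Ad(K)$-bundle $Q'\subseteq R_h$, so $h_\sigma=h$. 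The step I expect to be the real obstacle is precisely the passage from the curvature hypothesis to genuine control of the holonomy \emph{algebra} (rather than merely the automatic containment $H\subseteq\tilde K$): this is where the bundle-of-Lie-algebras structure, the parallelism of the bracket, and the compactness of $X$ are all used; the rest is the bookkeeping that reconciles the metric reduction $R_h$ with the tautological reduction $\overline E$ coming from $E$ and then descends from $E/\ker\Ad$ back to $E$.
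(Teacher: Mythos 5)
Your proof follows essentially the same route as the paper's: pass to the holonomy bundle of the Chern connection of $h$, use Ambrose--Singer together with the curvature hypothesis and the connectedness assumption to place the holonomy group inside $\Ad(K)$, and then convert the resulting reduction of the linear frame bundle of $\Ad E$ into a reduction of $E$ itself to $K$. The one substantive difference is that you justify the crucial step --- that the curvature values, once parallel-transported back to the base point, still lie in $\operatorname{ad}$ of the fibre --- by first proving that the Lie bracket of $\Ad E$ is parallel for the Chern connection (Jacobi identity plus a Bochner-type integration on the compact K\"ahler base), so that $\operatorname{ad}(\Ad E)$ is a parallel subbundle of $\End(\Ad E)$; the paper simply asserts that the curvature hypothesis controls the holonomy algebra, and your extra lemma is correct and genuinely tightens that step. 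Likewise, the descent from the $\Ad(K)$-reduction of $E/\ker\Ad$ back to a $K$-reduction of $E$, which you carry out via the contractibility of $Z(G)/(Z(G)\cap K)$, is only implicit in the paper's ``$E'$ induces a reduced subbundle $E''$'' (the remaining loose ends --- e.g.\ that the holonomy group, a connected subgroup of a compact group, is actually closed, and the compatibility of the chosen frame at the base point with both the unitary and the tautological reductions --- are glossed over equally by you and by the paper).
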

 \begin{proof} 
Let $H(h)$ be the holonomy group of the Chern connection of  $h$. After fixing a point $u$ in the principal   bundle $L(\Ad E)$, the latter has a reduced   bundle $P(h,u)$ with structure group $H(h)$.  
The curvature of the Chern connection acting on the tangent vectors to $L(\Ad E)$ generates the Lie algebra of the holonomy group  $H(h)$. Under our assumptions, this implies that the Lie algebra of  $H(h)$  is contained in the Lie algebra of $K$ (more precisely, it is contained in the image of the Lie algebra of $K$ under the adjoint map).  Since  the   holonomy group  $H(h)$ is connected,  it  is contained in $K$.

This implies that the holonomy bundle $P(h,u)$ can be regarded as a   reduction $E'$ of the structure group of $E$ to  $H(h)$.
Moreover, $E'$ induces  a reduced  subbundle $E''\subset E$ with structure group $K$.\footnote{This is a general feature: if $E\to X$ is a principal Higgs $G$-bundle, $G'\subset G''\subset G$ are nested subgroups, and $\fE'$ is a reduction of the structure group of $E$ to $G'$, the latter corresponds to a section $X \to E/G'$; composing with the morphism $E/G'\to E/G''$, this gives a reduction $E''$ of $E$ to $G''$. Note that
$E'' \simeq E' \times_{G'}G''$.} By construction the corresponding   reduction of $E$ to $K$  induces in $\Ad E$ the metric $h$. 
 \end{proof}
 
 \begin{remark} Note that if $\fE=(E,\phi)$ is a Higgs bundle with a metric $h$, 
the  curvature of the
 Hitchin-Simpson connection $\mathcal D_{h,\phi}$  lies in the image of $\Ad$ if and only if the curvature of the Chern connection $D_h$ does.  \end{remark}

 \begin{prop} {\rm \cite{Nom56}} There exist reductions of $E$ to $K$ whose holonomy group is $K$.
 \label{goodhol}
 \end{prop}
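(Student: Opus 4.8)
The plan is to first exhibit \emph{some} reduction of the structure group of $E$ to $K$, and then to alter it near a point, inside a coordinate ball over which $E$ is holomorphically trivial, so that the curvature of the resulting Chern connection generates all of the Lie algebra $\mathfrak k$; by the Ambrose--Singer theorem this will force the restricted holonomy group, hence the holonomy group, to be $K$.

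To begin, a reduction to $K$ exists at all: since $G$ is connected and reductive, the Cartan decomposition identifies $G$ with $K\times\mathfrak m$ as a smooth manifold, where $\mathfrak m$ is the $(-1)$-eigenspace of $\iota$, so $K$ is connected and $G/K$ is diffeomorphic to the vector space $\mathfrak m$, in particular contractible; hence $E/K\to X$, having contractible fibre, admits a global section $\sigma_0$, with reduced principal $K$-bundle $P_0\subset E$. After $\sigma_0$ is fixed, every other reduction to $K$ is encoded by a global section of the associated fibre bundle $P_0\times_K(G/K)\cong P_0\times_K\mathfrak m$, a vector bundle; so a reduction may be altered freely over an open set while being left equal to $\sigma_0$ off a slightly larger one. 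Next I would fix a coordinate ball $B\subset X$ over which $E$, hence $P_0$, is holomorphically trivial. In that trivialization a reduction of $E|_B$ to $K$ is just a smooth map $u\colon B\to G/K\cong\mathfrak m$, and the Chern connection $D_\sigma$ has connection form of type $(1,0)$, depending on the first derivatives of $u$, with curvature a $\mathfrak k$-valued $(1,1)$-form depending on $u$ to second order. The heart of the argument is to choose $u$, supported in $B$, so that the values assumed by this curvature over the points of $P_0$ above $B$ span $\mathfrak k$: Ambrose--Singer then yields that the Lie algebra of the restricted holonomy group of $D_\sigma$ is $\mathfrak k$, so the restricted holonomy group is $K$ (being connected); since the holonomy group lies between the restricted holonomy group and $K$, it too equals $K$. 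Patching this local reduction with $\sigma_0$ over $X\setminus B$ gives the desired global reduction.

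The step I expect to be the main obstacle is precisely the spanning claim: one must check that the curvatures of Chern connections of local reductions are flexible enough to generate $\mathfrak k$. For the semisimple part of $\mathfrak k$, I would take $u=\exp(f\,\xi)$ with $f$ a bump function on $B$ and $\xi$ ranging over $\mathfrak m$, compute the curvature to leading order, and use that $[\mathfrak m,\mathfrak m]$ equals the semisimple part of $\mathfrak k$; for the central compact torus in $\mathfrak k$ one uses that the Chern connection of a non-constant Hermitian metric on a trivial line bundle over $B$ has nonzero curvature, a purely imaginary $(1,1)$-form that can be aimed in any prescribed central direction. This local construction is essentially the one underlying Nomizu's theorem \cite{Nom56}; the only new feature is that the connection is required to be a Chern connection, which is harmless since a holomorphic principal bundle is locally holomorphically trivial and so affords ample local gauge freedom. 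A little care is also needed in the Ambrose--Singer step, where the base point of $P_0$ should be chosen over $B$ and the connectedness of $B$ and of $X$ is used to ensure that the points carrying the relevant curvature lie on the holonomy subbundle through it.
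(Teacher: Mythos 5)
Your argument is correct in outline, and it is doing genuinely more than the paper, which offers no proof at all: Proposition \ref{goodhol} is stated with a bare citation to Nomizu, whose theorem produces \emph{some} connection with full holonomy on a given principal $K$-bundle, whereas what is needed here is a \emph{reduction} whose associated \emph{Chern} connection has full holonomy --- the connection is not a free parameter once the reduction and the holomorphic structure are fixed. Your strategy (start from any reduction, which exists because $G/K\cong\mathfrak m$ is contractible; perturb it inside a ball where $E$ is holomorphically trivial so that the resulting curvature values span $\mathfrak k$; conclude by Ambrose--Singer) is exactly the right adaptation, and the patching via sections of $P_0\times_K\mathfrak m$ and the holonomy-bundle bookkeeping are handled correctly. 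One computational point should be repaired, though. For the ansatz $u=\exp(f\xi)$ with a \emph{single} $\xi\in\mathfrak m$, the quadratic term of the curvature is proportional to $\partial f\wedge\bar\partial f\otimes[\xi,\xi]=0$, so the mechanism you invoke ($[\mathfrak m,\mathfrak m]$ equal to the semisimple part of $\mathfrak k$) does not operate for that ansatz; you would need two noncommuting directions $f_1\xi_1+f_2\xi_2$ to see it. Fortunately the \emph{linear} term, proportional to $\bar\partial\partial f\otimes\xi$, already does the job: for real $f$ the form $\bar\partial\partial f$ is purely imaginary on real tangent vectors, so this term takes values in $i\mathbb R\cdot\xi\subset i\mathfrak m=\mathfrak k$, and letting $\xi$ run over a basis of $\mathfrak m$ (using bump functions supported near distinct points of $B$) produces curvature values spanning all of $\mathfrak k$, central directions included, in one stroke --- no separate line-bundle argument is needed. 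Since a Lie subalgebra of $\mathfrak k$ containing a spanning set is $\mathfrak k$ itself, and the span extracted from Ambrose--Singer is $\operatorname{Ad}$-invariant under the (connected) restricted holonomy group, one concludes that the restricted holonomy group, hence the holonomy group, is $K$. With that correction your proof stands, and it usefully records the argument the paper leaves implicit.
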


Note that since $K$ is a maximal compact subgroup of a connected reductive group over $\C$, it is connected.

\begin{lemma} Let $h_t$ the solution of the Donaldson flow with initial datum $h_0$. If the holonomy of $h_0$ is connected, the same is true for the holonomy group of $h_t$ for every $t \ge 0$.
\label{connected}
\end{lemma}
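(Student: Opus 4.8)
The statement asserts that the Donaldson heat flow $h_t$ on $\mathrm{Ad}\,E$ preserves connectedness of the holonomy group. I would prove this by exhibiting a continuous (indeed smooth) path of holonomy groups and showing it cannot ``jump'' components. The key structural input is the evolution equation $h_t^{-1}\partial_t h_t = -(\mathcal K_t - c\,\mathrm{Id})$: the mean curvature, and more importantly the full curvature $\mathcal R_t$ of the Hitchin--Simpson connection, varies smoothly in $t$. Fix a base point $x_0\in X$ and a frame $u_0$ in $L(\mathrm{Ad}\,E)$ over $x_0$. By the Ambrose--Singer theorem, the Lie algebra $\mathfrak h(h_t)$ of the holonomy group $H(h_t)$ (based at $u_0$, with respect to the Chern connection of $h_t$) is spanned by the curvature endomorphisms $\mathcal R_t(\cdot,\cdot)$ parallel-transported around loops. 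The first step is to record that the reduced (restricted) holonomy group $H^0(h_t)$ — the identity component — is always connected by definition, so the only thing at issue is whether $H(h_t)/H^0(h_t)$ is trivial, equivalently whether $H(h_t)$ is connected.

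**Main steps.** First I would reduce the problem to the restricted holonomy group. Since $X$ is assumed connected (stated in Section~\ref{Sec:2}), the full holonomy group $H(h_t)$ is generated by $H^0(h_t)$ together with the images of $\pi_1(X,x_0)$ under the ``holonomy representation modulo $H^0$''; more precisely $H(h_t)/H^0(h_t)$ is a quotient of $\pi_1(X,x_0)$. Connectedness of $H(h_0)$ means exactly that this quotient is trivial at $t=0$: every loop in $X$ has holonomy lying in $H^0(h_0)$. Second, I would argue by a continuity/openness argument in $t$: for a fixed loop $\gamma$, the holonomy $\mathrm{hol}_{h_t}(\gamma)$ depends continuously (smoothly) on $t$, since the connection forms of the Chern connections $D_{h_t}$ depend smoothly on $h_t$, which in turn solves a smooth parabolic PDE. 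Third — and this is the crux — I would show the set $T_\gamma=\{t\ge 0 : \mathrm{hol}_{h_t}(\gamma)\in H^0(h_t)\}$ is both open and closed in $[0,\infty)$. Closedness is delicate because $H^0(h_t)$ itself moves with $t$; the clean way is to control $H^0(h_t)$ from above by a fixed group. Here I would invoke the mechanism already used in Proposition~\ref{p1} together with the Remark following it: because $\phi$ is fixed, the curvature $\mathcal R_t$ of the Hitchin--Simpson connection lies in the image of $\mathrm{Ad}$ precisely when the Chern curvature does, and the Donaldson flow — whose velocity $-(\mathcal K_t-c\,\mathrm{Id})$ is built from $\mathcal R_t$ — preserves membership of the curvature in $\mathrm{Ad}(\mathrm{Ad}\,E_\sigma)$; consequently, if $h_0$ comes from a reduction to $K$ (the case of real interest, by Proposition~\ref{goodhol}), then $\mathfrak h(h_t)\subseteq \mathrm{Ad}(\mathfrak k)$ for all $t$, a \emph{fixed} subalgebra, and correspondingly $H^0(h_t)$ is contained in the fixed connected group $\mathrm{Ad}(K)^0=\mathrm{Ad}(K)$. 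With $H^0(h_t)$ trapped inside a fixed compact connected group, $T_\gamma$ is closed: a limit of holonomies lying in $\mathrm{Ad}(K)$ lies in $\mathrm{Ad}(K)$, and one checks it still lies in the restricted holonomy group by the Ambrose--Singer description, using smooth dependence of $\mathcal R_t$. Openness of $T_\gamma$ is the easier direction, following from the fact that $H^0(h_t)$ varies lower-semicontinuously (its Lie algebra is spanned by curvature terms that deform smoothly, so nearby $H^0(h_s)$ contains a neighbourhood's worth of the generating elements). Running this for every $\gamma\in\pi_1(X,x_0)$ gives $H(h_t)=H^0(h_t)$ for all $t$, i.e. connectedness is preserved.

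**The main obstacle.** The genuine difficulty is the closedness half of the connectedness argument, because the restricted holonomy group $H^0(h_t)$ is not a priori constant in $t$ and could in principle shrink in the limit, breaking the containment one needs. The resolution — and the reason the hypothesis is phrased via the Donaldson flow rather than an arbitrary path of metrics — is that the flow is constrained: its infinitesimal generator is the curvature-derived endomorphism $\mathcal K_t-c\,\mathrm{Id}$, and one shows (paralleling Proposition~\ref{p1} and its Remark, and the technique of \cite{BisJacStem}) that the property ``Chern curvature lies in $\mathrm{image}(\mathrm{Ad})$'' is itself preserved along the flow; this pins $\mathfrak h(h_t)$ inside the fixed algebra $\mathrm{Ad}(\mathfrak k)$ and makes the limiting argument go through. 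I would therefore structure the proof so that the invariance-of-the-$\mathrm{Ad}$-condition computation is isolated as the technical heart, with the topological connectedness statement then following formally. A minor additional point to handle carefully is that Ambrose--Singer gives the Lie algebra of the holonomy group from curvature, so one needs the connectedness-from-below step to be about the \emph{restricted} holonomy and then separately control the component group via $\pi_1(X)$; keeping these two layers distinct avoids circularity.
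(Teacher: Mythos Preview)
Your approach is substantially more elaborate than the paper's and contains genuine gaps.

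The paper argues by elementary continuity, with no Ambrose--Singer theorem and no open-closed dichotomy. Fixing $u\in L(\Ad E)$, it defines a map $f\colon H(h_0)\times\R_{\ge 0}\to Gl(\g)$ by sending $(g,t)$ to the $h_t$-holonomy of a loop $\gamma$ whose $h_0$-holonomy equals $g$; it asserts this is well-defined and continuous, so $f(H(h_0)\times\{t\})$ is the continuous image of a connected set, hence connected, and it equals $H(h_t)$ because every loop based at $\pi(u)$ represents some element of $H(h_0)$. Note that this uses only that the family $h_t$ is continuous in $t$, not the specific form of the Donaldson heat equation.

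Your proposal has two structural problems. First, the closedness step does not close: trapping $H^0(h_t)\subset\Ad(K)$ tells you that a limit of holonomies $\mathrm{hol}_{h_{t_n}}(\gamma)\in H^0(h_{t_n})$ lands in the fixed group $\Ad(K)$, but says nothing about whether it lands in the possibly much smaller subgroup $H^0(h_{t_\ast})$. Ambrose--Singer describes the \emph{Lie algebra} of $H^0(h_{t_\ast})$ via curvature; it does not decide membership of a particular group element, so the ``one checks it still lies in the restricted holonomy group'' step has no mechanism behind it. Second, the technical heart you isolate --- that the Donaldson flow preserves the condition ``Chern curvature lies in the image of $\Ad$'' --- is, in the paper's logic, exactly the content of Theorem~\ref{remains}, whose proof passes through Lemma~\ref{sub} and the restriction to $\mathcal H^+_c(\Ad E)$, a restriction justified precisely by the present Lemma~\ref{connected}. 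Invoking that invariance here is therefore circular in the paper's order of dependence. Even if you supplied an independent proof of the $\Ad$-invariance, your argument would establish the lemma only under the extra hypothesis that $h_0$ comes from a reduction to $K$, which is strictly stronger than the stated hypothesis of connected holonomy.
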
  

\begin{proof}
Realize $H(h_0)$ as a subgroup of $Gl(\g)$ by choosing a point $u\in L(\Ad E)$.
Define a map $f\colon H(h_0) \times \R_{\ge 0} \to Gl(\g)$ as follows.   For $(g,t)\in H(h_0) \times \R_{\ge 0}$ represent $g$ by a loop $\gamma\colon [0,1]\to X $ based at the base point of the fibre containing $u$;
consider the curve $\tilde\gamma$ obtained in $L(\Ad E)$ by horizontally lifting $\gamma$ starting from $u$ using the Chern connection of the metric $h_t$. Let $g_t\in Gl(\g)$ be such that $\tilde\gamma(t) = u\,g_t$. Then $f(g,t)=g_t$. This is independent of the choice of $\gamma$ and is a continuous map, hence its image for fixed $t$ is connected. But the latter is exactly the holonomy group $H(h_t)$.
\end{proof}

Let    $\mathcal H^+(\Ad E)$ be the space of Hermitian metrics on $\Ad E$. Let $\mathcal H^+_c(\Ad E)$ be the subspace of $\mathcal H^+(\Ad E)$ formed by the metrics whose holonomy group is connected. In view of Lemma \ref{connected}, one can restrict to this space without loss of generality. We also note that $\mathcal H^+_c(\Ad E)$ is an open submanifold of $\mathcal H^+(\Ad E)$. Finally,
let  $\mathcal H^+_K(\Ad E)$ be the subspace of  $\mathcal H^+_c(\Ad E)$ formed by the metrics that come from   reduction of $\fE$ to $K$.

\begin{lemma} \label{sub} $\mathcal H^+_K(\Ad E)$ is a closed submanifold of  $\mathcal H^+_c(\Ad E)$.\end{lemma}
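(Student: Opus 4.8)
The plan is to exhibit $\mathcal H^+_K(\Ad E)$ locally as the zero set of a smooth submersion onto a complementary subspace, using the characterization of $\mathcal H^+_K(\Ad E)$ provided by Proposition \ref{p1}: a metric $h$ in $\mathcal H^+_c(\Ad E)$ lies in $\mathcal H^+_K(\Ad E)$ precisely when the curvature $R_h$ of its Chern connection takes values in the subbundle $\Ad(\Ad E_\sigma)\subset\End(\Ad E)$, equivalently in the image of the Lie algebra bundle of $K$ under the adjoint map. First I would recall that $\mathcal H^+_c(\Ad E)$, being an open submanifold of the affine-type Hilbert manifold $\mathcal H^+(\Ad E)$, has tangent space at $h$ canonically identified with the space of $h$-self-adjoint endomorphisms of $\Ad E$; moreover the standard parametrization $h_s = h\,e^{s\eta}$ shows that the first variation of $R_{h_s}$ along $\eta$ is a second-order linear differential operator in $\eta$ (essentially $\bar\partial\partial^h\eta$ up to lower order), which will be the linearization we work with.

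Next I would fix a base point $h_0\in\mathcal H^+_K(\Ad E)$, coming from a reduction $\sigma_0$ of $E$ to $K$. Using the Cartan decomposition $\g=\mathfrak k\oplus\mathfrak p$ transported to the adjoint bundle via $\sigma_0$, one gets a smooth splitting of $\End(\Ad E)$ (restricted to the $h_0$-self-adjoint part) into the piece coming from $\Ad(\Ad E_{\sigma_0})$ — call it the ``$\mathfrak k$-part'' — and a complementary ``$\mathfrak p$-part''. The claim to prove is that, near $h_0$, $\mathcal H^+_K(\Ad E)$ is a submanifold whose tangent space is exactly the $\mathfrak k$-part of $T_{h_0}\mathcal H^+_c(\Ad E)$. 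To set up the implicit function theorem I would build a smooth map $\Phi$ from a neighborhood of $h_0$ in $\mathcal H^+_c$ to a Hilbert space of sections of the $\mathfrak p$-valued $2$-forms, sending $h$ to the $\mathfrak p$-component (with respect to the splitting induced by $h$ itself, or by $h_0$ — these differ by lower-order terms that do not affect transversality) of $\Lambda R_h$, the contracted curvature; by Proposition \ref{p1} the vanishing of this $\mathfrak p$-component characterizes membership in $\mathcal H^+_K$. Then I would compute the derivative $d\Phi_{h_0}$ and show it is surjective with kernel the $\mathfrak k$-part, so that $\Phi^{-1}(0)$ is a closed submanifold near $h_0$.

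The technical heart — and the step I expect to be the main obstacle — is the surjectivity and kernel computation for $d\Phi_{h_0}$. On the $\mathfrak k$-part of the variation, the curvature variation stays in $\Ad(\Ad E_{\sigma_0})$ by equivariance of the construction in Section \ref{Sec:3} (the $\Ad$ morphism intertwines the Chern connection of $D_{\sigma}$ with that of $h_\sigma$), so the $\mathfrak p$-component vanishes to first order; this gives the kernel inclusion. For the complementary direction one must show that the composition ``take $\eta$ in the $\mathfrak p$-part, form the linearized curvature, project to $\mathfrak p$'' is an elliptic operator with closed range of finite-dimensional cokernel, and then absorb the cokernel — the cleanest route is to note that this operator, restricted to $\mathfrak p$-valued sections, is (after contracting with $\omega$) a Laplace-type operator whose image together with the harmonic space exhausts the target; alternatively, since we only need a submanifold structure and not a global graph, it suffices to verify that $d\Phi_{h_0}$ is a split surjection onto its image and that $\mathcal H^+_K$ is cut out by $\Phi=0$, which follows once one checks $\Phi$ is a submersion by the above ellipticity. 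Closedness of $\mathcal H^+_K(\Ad E)$ in $\mathcal H^+_c(\Ad E)$ is then the soft part: the condition ``$R_h$ lies in the closed subbundle $\Ad(\Ad E_\sigma)$'' is preserved under $C^0$-limits of metrics with uniformly bounded curvature, and more directly the reduction $\sigma$ itself, being a section of the proper (hence, over compact $X$, closed) fibre bundle $E/K$, varies continuously and has a limit, so a convergent sequence in $\mathcal H^+_K$ has a limit in $\mathcal H^+_K$; I would phrase this last point using Proposition \ref{p1} to avoid compactness subtleties, since membership is an equality of two continuous sub-objects of $\End(\Ad E)$.
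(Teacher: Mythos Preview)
Your strategy coincides with the paper's: exhibit $\mathcal H^+_K(\Ad E)$ as the zero locus of the map that sends a metric to the component of its curvature lying in the complement of the image of $\Ad$, and then argue this is a submanifold. The paper's own proof is in fact only a sketch --- it sets up exactly this map (called $\varpi$ there, taking values in the orthogonal complement $\Ad^\perp$ inside $\Omega^2(\End(\Ad E))$ with respect to an Ad-invariant inner product built from the Killing--Cartan form and integration over $X$) and then simply says ``by studying this map one proves the claim.'' So you have actually written more than the paper does about the linearization and the implicit-function-theorem step.

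Two small points of difference worth noting. First, the paper works with the full curvature $2$-form $\mathcal R_{h,\phi}$ of the Hitchin--Simpson connection and a \emph{global} orthogonal splitting of $\Omega^2(\End(\Ad E))$, whereas you work with the Chern curvature and a splitting anchored at a base point $h_0$ via the Cartan decomposition; these are equivalent setups (the Remark following Proposition~\ref{p1} says the Chern and Hitchin--Simpson curvatures lie in the image of $\Ad$ simultaneously), though the global Ad-invariant splitting is cleaner because it does not depend on the choice of $h_0$. Second, there is an inconsistency in your map $\Phi$: you first declare its target to be $\mathfrak p$-valued $2$-forms, but then define it via the \emph{contracted} curvature $\Lambda R_h$. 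The characterization in Proposition~\ref{p1} is about the full curvature $2$-form, and the vanishing of the $\Ad^\perp$-component of $\Lambda R_h$ alone does not recover that condition; you should use $R_h$ (or $\mathcal R_{h,\phi}$) itself, as the paper does.
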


\begin{proof} We give only a sketch of the proof. By using an Ad-invariant bilinear form on the Lie algebra $\mathfrak g$ (the Killing-Cartan form on the semisimple part of $\mathfrak g$ and any nondegenerate  bilinear form on the radical of $\mathfrak g$), the K\"ahler metric on $X$, and integration on $X$, one can endow the vector space $\Omega^2(\End(\Ad E))$ with a scalar 
product;\footnote{Actually this space should be suitably completed using Sobolev norms, but we shall omit these details here.} then we can orthogonally split this space into the image of the adjoint map Ad and its complement $\mathrm{Ad}^\perp$. We denote by $r:\Omega^2(\End(\Ad E)) \to \mathrm{Ad}^\perp$ the projection and
 introduce a map
$$ \varpi \colon \mathcal H^+_c(\Ad E) \to \mathrm{Ad}^\perp$$
which to a metric $h$ on $E$, having connected holonomy group, associates the projection $r(\mathcal R_{h,\phi})$
onto $ \mathrm{Ad}^\perp$ of the curvature of the Hitchin-Simpson connection $\mathcal D_{h,\phi}$. By Proposition \ref{p1}, $\mathcal H^+_K(\Ad E)$ coincides with the locus $\varpi^{-1}(0)$. By studying this map one proves the claim.  \end{proof}

 \begin{thm} \label{remains} There exist  metrics $h_0$ on $\Ad E$, coming from a   reduction of $\fE$ to $K$, such that, denoting by $h_t$ the solution of the Donaldson flow with initial datum $h_0$, for every $t\ge 0$ the metric $h_t$ comes from a   reduction of $\fE$ to $K$.\label{preserve}
 \end{thm}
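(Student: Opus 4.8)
The plan is to show that the closed submanifold $\mathcal H^+_K(\Ad E)$ of $\mathcal H^+_c(\Ad E)$ is invariant under the Donaldson flow, and then to take the initial datum inside it. First I would use Proposition \ref{goodhol} to fix a reduction $\sigma_0$ of the structure group of $E$ to $K$ whose holonomy group is all of $K$; since $K$, and hence $\Ad(K)$, is connected, the metric $h_0=h_{\sigma_0}$ induced on $\Ad E$ belongs to $\mathcal H^+_K(\Ad E)$. By Lemma \ref{connected} the solution $h_t$ of the Donaldson flow with initial datum $h_0$ stays in $\mathcal H^+_c(\Ad E)$, and it is defined for all $t\ge 0$. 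Thus it suffices to prove that the vector field $h\mapsto V(h):=-(\mathcal K_{h,\phi}-c\,\Id_{\Ad E})$ generating the flow is \emph{tangent} to $\mathcal H^+_K(\Ad E)$ along it; then, by uniqueness of the solution of the parabolic evolution equation, $h_t$ is forced to remain in $\mathcal H^+_K(\Ad E)$ for every $t\ge 0$, which is exactly the assertion.

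By Lemma \ref{sub} we have $\mathcal H^+_K(\Ad E)=\varpi^{-1}(0)$, where $\varpi(h)=r(\mathcal R_{h,\phi})$, so tangency of $V$ at a point $h=h_\sigma\in\mathcal H^+_K(\Ad E)$ amounts to $d\varpi_h(V(h))=0$. The differential $d\varpi_h$ sends a Hermitian endomorphism $v$ to $r(P_h v)$, where $P_h$ is the linearization of $h\mapsto\mathcal R_{h,\phi}$ (so $P_h v=\frac{d}{dt}\big|_{t=0}\mathcal R_{he^{tv},\phi}$): this is a second-order differential operator in $v$ whose coefficients are built out of the Hitchin--Simpson connection $\mathcal D_{h,\phi}$ of $(\Ad E,\tilde\phi,h)$ and of $\phi$ --- indeed it is the operator governing the evolution of $\mathcal R$ along the flow. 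Since a constant rescaling of the metric changes neither the Chern connection nor the $h$-adjoint of $\tilde\phi$, we have $P_h(\Id_{\Ad E})=0$; hence $d\varpi_h(V(h))=-r(P_h(\mathcal K_{h,\phi}))$, and it is enough to show that $P_{h_\sigma}(\mathcal K_{h_\sigma,\phi})$ lies in the image of $\Ad$.

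Here the construction of $h_\sigma$ enters. On one hand, as recalled just before Proposition \ref{p1}, the morphism $\Ad$ carries the curvature of the Hitchin--Simpson connection $D_{\sigma,\phi}$ on $\fE$ to that of $\mathcal D_{h_\sigma,\phi}$ on $\Ad E$; applying $i\Lambda$ gives $\mathcal K_{h_\sigma,\phi}=\Ad(\mathcal K_{\sigma,\phi})$, so $\mathcal K_{h_\sigma,\phi}$ is in the image of $\Ad\colon\Gamma(\Ad E)\to\Gamma(\End(\Ad E))$. On the other hand, that same morphism intertwines $D_{\sigma,\phi}$ with $\mathcal D_{h_\sigma,\phi}$ and the Higgs fields with one another, hence it intertwines the linearizations: $P_{h_\sigma}(\Ad w)=\Ad(\widetilde P w)$, where $\widetilde P\colon\Gamma(\Ad E)\to\Omega^2(\Ad E)$ is the corresponding operator for $\fE$. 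Combining the two, $P_{h_\sigma}(\mathcal K_{h_\sigma,\phi})=\Ad(\widetilde P(\mathcal K_{\sigma,\phi}))$ lies in the image of $\Ad$, so $r$ annihilates it and $d\varpi_{h_\sigma}(V(h_\sigma))=0$. This proves tangency, and hence the theorem.

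The step I expect to be the main obstacle is the one in the preceding paragraph: showing rigorously that, at a metric $h_\sigma$ coming from a reduction, the linearization operator $P_{h_\sigma}$ factors through $\Ad$. Making this precise requires matching every ingredient of $\mathcal D_{h_\sigma,\phi}$ on $\Ad E$ --- the Chern connection $D_\sigma$ of the $K$-reduction, the Higgs field, and the $h_\sigma$-adjoint of $\tilde\phi$ (which has to be identified with the image of $\iota(\phi)$) --- with the $\Ad$-images of the corresponding objects on $\fE$, and then observing that the variational formula for the curvature is a universal expression in the connection and the Higgs field, so that it is compatible with the morphism $\Ad$. As usual, the tangency-plus-uniqueness argument should be carried out in the appropriate Sobolev completions, but this is routine once Lemmas \ref{connected} and \ref{sub} are available.
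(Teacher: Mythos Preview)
Your proposal is correct and follows the same overall strategy as the paper: pick $h_0$ via Proposition~\ref{goodhol}, and then show that the Donaldson-flow vector field is tangent to the closed submanifold $\mathcal H^+_K(\Ad E)$, so that the trajectory through $h_0$ stays inside it.

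The only difference is in how tangency is checked. You work through the differential of the defining map $\varpi$, reducing the question to whether the linearized curvature operator $P_{h_\sigma}$ sends $\mathcal K_{h_\sigma,\phi}$ back into the image of $\Ad$; you then argue this by observing that $P_{h_\sigma}$ intertwines with $\Ad$ because every ingredient of the Hitchin--Simpson connection on $\Ad E$ is the $\Ad$-image of the corresponding object on $\fE$. The paper instead argues directly at the level of tangent/normal spaces: the gradient $V(h)$ is (up to the constant term) the mean curvature $\mathcal K_{h,\phi}$, which lies in the image of $\Ad$; and from the description in Lemma~\ref{sub} the normal space to $\mathcal H^+_K(\Ad E)$ is orthogonal to the image of $\Ad$, so $V(h)$ is automatically tangent. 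This avoids computing $d\varpi$ or invoking the linearized curvature operator altogether. Your route is more explicit and arguably closer to what Lemma~\ref{sub} literally provides (namely $\mathcal H^+_K(\Ad E)=\varpi^{-1}(0)$), while the paper's route is shorter once one accepts that the tangent space to $\mathcal H^+_K(\Ad E)$ is exactly the image of $\Ad$ inside the Hermitian endomorphisms.
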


 \begin{proof} We take an $h_0$ induced by a    reduction $\sigma$ of $\fE$ to $K$ whose holonomy group is $K$ (cf.~Prop.~\ref{goodhol}). 
 If  the gradient vector field of the Donaldson functional is tangent  to the submanifold $\mathcal H^+_K(\Ad E)$,  the Donaldson flow remains inside the latter. So we need to prove that claim. 
 Under our assumptions, the gradient flow of the Donaldson functional at a point $h \in \mathcal H^+(\Ad E)$ is proportional to the mean curvature $\mathcal K_{h,\phi}$ of the Hitchin-Simpson connection $\mathcal D_{h,\phi}$, which takes values in the image of Ad.  On the other hand, the normal vector field to the submanifold $\mathcal H^+_K(\Ad E)$ is orthogonal to the image of Ad for what we saw in Lemma \ref{sub}.
 This   implies that the gradient flow is contained in the submanifold $\mathcal H^+_K(\Ad E)$.
 \end{proof}

We can now prove Theorem \ref{MainThm}. Following the argument in  \cite{BisJacStem} we can reduce to the case of semisimple structure group. If $G$ is semisimple,  the center $\z$ of the Lie algebra $\g$ is trivial and the formula in the thesis becomes $$|\mathcal K_{\sigma_\xi,\phi}|< \xi\,.$$

If the principal Higgs $G$-bundles $\fE$ admits an approximate Hermitian-Yang-Mills structure, then the adjoint bundle $\Ad(\fE)$ also does, as we noted above. By Theorem \ref{approHYM} on \cite{BG2} we conclude that $\Ad(\fE)$ is semistable. But this holds true if and only if $\fE$ is semistable.

For the converse implication, assume that $\fE$ is semistable, so that $\Ad(\fE)$ is semistable as well. We consider the Donaldson flow for the Higgs bundle $\Ad \fE$ with initial metric $h_0$ that comes from a reduction of $\fE$ to $K$ and has connected holonomy group. By Theorem \ref{remains}, the ensuing flow remains inside the submanifold $\mathcal H^+_K(\Ad E)$. In view of  Theorem \ref{Dona}, this induces an approximate Hermitian-Yang-Mills structure on $\Ad \fE$ which can be regarded as approximate  Hermitian-Yang-Mills structure on $\fE$.

 \bigskip\frenchspacing

\end{document}